\theoremstyle{plain}
\theoremstyle{plain}
\newtheorem{theorem}{Theorem}[section]
\newtheorem{corollary}[theorem]{Corollary}
\theoremstyle{definition}
\newtheorem{defin}[theorem]{Definition}
\newtheorem{remark}[theorem]{Remark}
\theoremstyle{remark}
\numberwithin{equation}{section}
\def\dis{\displaystyle}
\def\io{\int_{\Omega}} 
\def\supp{\text{\text{supp}}}
\newcommand{\car}[1]{\raise1pt\hbox{$\chi$}_{#1}}
\definecolor{sap}{RGB}{120,36,51}
\def\R{\mathbb{R}}
\def\N{\mathbb{N}}
\def\RN{\mathbb{R}^{N}}
\def\G{\nabla}
\def\div{\text{\text{div}}}
\def\sob{W^{1,p}_{0}(\Omega)}
\def\linf{L^{\infty}(\Omega)}
\def\lp'n{(L^{p'}(\Omega))^{N}}
\def\car#1{\chi_{_{{#1}}}}
\def\norma#1#2{\|#1\|_{\lower 4pt \hbox{$ \scriptstyle #2$ }}}
\author[R. Durastanti]{Riccardo Durastanti}
\author[F. Oliva]{Francescantonio Oliva}
\address[R. Durastanti]{Dipartimento di Scienze di Base e Applicate per l' Ingegneria, ``Sapienza" Universit\`a di Roma, Via Scarpa 16, 00161 Roma, Italy 
\\ riccardo.durastanti@sbai.uniroma1.it}
\address[F. Oliva]{Dipartimento di Scienze di Base e Applicate per l' Ingegneria, ``Sapienza" Universit\`a di Roma, Via Scarpa 16, 00161 Roma, Italy 
\\ francesco.oliva@sbai.uniroma1.it}
\keywords{Nonlinear elliptic equations, Singular elliptic equations, Sublinear elliptic equations, Uniqueness} \subjclass[2010]{35J25, 35J60,  35J75, 35A01, 35A02}
\begin{document}

\title{Comparison principle for elliptic equations with mixed singular nonlinearities}

\maketitle

\begin{abstract}

We deal with existence and uniqueness of positive solutions of an elliptic boundary value problem modeled by  
\begin{equation*}
\begin{cases}
\dis -\Delta_p u= \frac{f}{u^\gamma} + g u^q & \mbox{in $\Omega$,} \\
u = 0  & \mbox{on $\partial\Omega$,}
\end{cases}
\end{equation*}
where $\Omega$ is an open bounded subset of $\RN$, $\Delta_p u:=\div(|\G u|^{p-2}\G u)$ is the usual $p$-Laplacian operator, $\gamma\ge 0$ and $0\le q\le p-1$; $f$ and $g$ are nonnegative functions belonging to suitable Lebesgue spaces.

\vskip 0.5\baselineskip
\end{abstract}

\tableofcontents

\section{Introduction}
\label{S1}
  
In this paper we deal with an elliptic problem which simplest model is 
\begin{equation}
\label{pbintro}
\begin{cases}
\dis -\Delta_p u= \frac{f}{u^\gamma} + g u^q & \mbox{in $\Omega$,} \\
u > 0  & \mbox{in $\Omega$,} \\
u = 0  & \mbox{on $\partial\Omega$,}
\end{cases}
\end{equation}
where $\Omega$ is an open bounded subset of $\RN$, $\Delta_p u:=\div(|\G u|^{p-2}\G u)$ is the $p$-Laplacian operator ($1<p<N$), $\gamma,q \ge 0$ are such that $q<p-1$ or $q=p-1$, which correspond to the \textit{sublinear} and to the \textit{linear} behaviour in case $p=2$; here $f,g$ are nonnegative functions belonging to suitable Lebesgue spaces. Clearly the Dirichlet problem \eqref{pbintro} is \textit{singular} since the request that the solution is zero on the boundary of the set implies that the right hand side blows up. For \eqref{pbintro} we are mainly interested to existence and uniqueness of possibly unbounded solutions with \textit{finite energy} (i.e. $u\in W^{1,p}_0(\Omega)$).

\medskip

Let us briefly recall the  mathematical framework concerning problem \eqref{pbintro}; we start with the non-singular case, namely $f\equiv 0$.
\\ The main idea of this paper comes from the seminal paper \cite{bros} where the authors show existence and uniqueness of a solution $u\in H^1_0(\Omega) \cap L^\infty(\Omega)$ to \eqref{pbintro} in case $p=2$, $f\equiv 0$, $q<1$ and $g$ as a bounded nonnegative function. Let us also mention that classical arguments apply once that $u$ is bounded in order to get a $C^1$-solution, at least when the set $\Omega$ is smooth enough.   
Later, in \cite{bo1}, in presence of a possibly  unbounded $g$ and if $q<p-1$, the existence of a solution is proven through an approximation process; here, even in the nonvariational case, it is proven existence of a solution with infinite energy (i.e. $u\not\in W^{1,p}_0(\Omega)$) for rough data $g$. 

\medskip

Let us briefly underline that, when $p=2$, problem \eqref{pbintro} with $f\equiv 0$ is strongly related to the porous media equation in the following way: if $u$ is a solution to \eqref{pbintro} then for some positive constant $c,\tau >0$
$$v(x,t) = c u(x)^q (t+\tau)^{\frac{-q}{1-q}},$$
is a solution to 
$$g(x)u_t - \Delta v^{\frac{1}{q}} = 0.$$

\medskip

On the other side there is a huge literature concerning the purely singular equations, namely $g\equiv 0$. In presence of regular $f$ (say a positive $f\in C^\eta(\Omega)$), \eqref{pbintro} was first treated in these pioneering works \cite{crt, lm, stuart}; here the authors obtain existence and uniqueness of a classical solution (i.e. $u\in C^2(\Omega)\cap C(\overline{\Omega})$). Moreover, among other things, one has that: $u\in C^{2,\eta}(\Omega)$, $u\not\in C^1(\overline{\Omega})$ if $\gamma>1$ and $u\not\in H^1_0(\Omega)$ if $\gamma\ge 3$. Furthermore we refer to \cite{ghl} for more interesting results regarding the regularity of $u$.      
\\ \\ \noindent For what concerns the weak theory of the purely singular case, existence of a distributional solution to \eqref{pbintro} when the $f$ is only a nonnegative function in $L^m(\Omega)$ ($m\ge 1$) is established in \cite{bo}. This solution, if $\gamma\le 1$ (i.e. the\textit{ mild singular} case), attains the boundary datum in the classical sense of Sobolev traces; otherwise, when $\gamma>1$ (i.e. the \textit{strong singular} case), only a power of the solution has zero Sobolev trace and the solution is shown to be locally in the same space. Later, in \cite{do,op2,ddo}, existence of solutions to \eqref{pbintro} is given when the right hand side is of the general form $h(s)f$, with $h$ as a nonnegative and not necessarily monotone function such that $h(s)\le s^{-\gamma}$ near zero and just bounded at infinity. For the nonhomogeneous case in which $q=0$ and $g\not \equiv 0$ we mention \cite{op}.

\medskip

Dealing with uniqueness is more tricky; in \cite{boca} the authors show that the solution is unique in the class of $H^1_0(\Omega)$ and this kind of result has been extended to general nonincreasing nonlinearities and nonlinear operators in \cite{o} for solutions in $W^{1,p}_0(\Omega)$. In \cite{bct}, when $p=2$, the authors show that there is at most one solution to \eqref{pbintro} belonging to $W^{1,1}_0(\Omega)$. 
\\ In \cite{op2}, uniqueness of a distributional solution belonging to $W^{1,1}_{\rm loc}(\Omega)$ (with suitable boundary conditions) is shown for a general measure datum and a nonincreasing nonlinearity. Finally in presence of a very general nonlinear operator and a nonincreasing $h$ it is shown in \cite{ddo} the existence and uniqueness of a renormalized solution for a diffuse measure datum $f$. For further reading on singular problems we refer to \cite{ces,cst,car, diazjfa,gmm, GMM, op}

\medskip

As one should expect the literature concerning \eqref{pbintro} in presence of both $f$ and $g$ not identically zero is less investigated. Already in \cite{stuart} the author proves existence of a classical solution to \eqref{pbintro} when both $f$ and $g$ are regular enough, $p=2$ and $q<1$. In the same direction we refer to \cite{copal} where it is also investigated the superlinear case, which is a completely different framework. The uniqueness of classical solutions to \eqref{pbintro} is shown in \cite{sy} in presence of the Laplacian operator and $q<p-1$; we also refer to \cite{cgr} where, in case of regular $f$ and  $g$ constant, it is proved existence and uniqueness of solutions to \eqref{pbintro} if $q\le 1$; here in the linear case it is proved existence under a smallness assumption on $g$ and nonnexistence otherwise. Then in \cite{locsc}, for $p>1$, through a sub and supersolution argument it is shown existence of solutions to \eqref{pbintro} when the right hand side is of the form $h(u)+k(u)$ and no monotonicity is assumed on $h,k$. In \cite{coco} it is investigated the existence of a solution to \eqref{pbintro} in case $p=2$ when $f$ and $g$ are functions in suitable Lebesgue spaces. 
Let us mention that in \cite{santos}, for $p>1$, the authors show existence and uniqueness of finite energy solutions to \eqref{pbintro} under suitable assumptions on $f,g$. We finally refer to \cite{fs,gg} for more interesting results.

\medskip

The aim of this work is twofold. Firstly, we deal with uniqueness of finite energy solutions by employing the idea contained in \cite{bros}. More precisely we want to prove it for positive solutions to the Dirichlet problem associated to 
\begin{equation}\label{introgenerale}
	-\Delta_p u = F(x,u),
\end{equation}
where $p>1$ and $F$ is a Carath\'eodory function which is possibly unbounded both at the origin and at the infinity and such that
\begin{equation}\label{intromonotonia}
	F(x,s)s^{1-p} \text{ decreases with respect to }s \text{ for a.e. }x\in \Omega. 
\end{equation}
Here the major difficult is dealing with a nonlinear operator when looking for comparison principles. Another issue which needs to be underlined is that the solutions are not required to be bounded; this implying the need of a suitable truncation arguments.      
   It is also worth mentioning that \eqref{intromonotonia} allows to deal with the case $q\le p-1$, at least for positive $f$ if one considers the model case given by \eqref{pbintro}. This result is presented as the comparison principle given by Theorem \ref{comparison} which, as a simple corollary, takes to uniqueness of finite energy solutions. 
   \\
   Other than uniqueness, we are interested to instances of finite energy solutions to \eqref{introgenerale}; this is done both in the mild and in the strongly singular case by means of approximation arguments firstly if $q<p-1$; then we also give an existence result in case $q=p-1$.
   Summarizing, if $q<p-1$, we provide existence of finite energy solutions to equations as in \eqref{pbintro} if $g\in L^{\left(\frac{p^*}{1+q}\right)'}(\Omega)$, $\gamma\le 1$ and $f\in L^{\left(\frac{p^*}{1-\gamma}\right)'}(\Omega),$
     where we mean $L^1(\Omega)$ once that $\gamma=1$.
     \\ Otherwise, we show that if $f\in L^m(\Omega)$ with $1<\gamma<2-\frac{1}{m}$ then the existence is guaranteed under the same assumptions on $g$. Let us also highlight that, as remarked in Section \ref{S4}, there are instances in which one could expect finite energy solutions up to $\gamma< 1+ \frac{p(m-1)}{(p-1)m}$. Finally, once again if $f\in L^{\left(\frac{p^*}{1-\gamma}\right)'}(\Omega)$, we also show the existence of a solution in case $q=p-1$ under a smallness assumption on $g$.

\medskip

Let us mention that formally the change of variable $v= \displaystyle \frac{u^{\gamma+1}}{\gamma+1}$ for $p=2$ takes \eqref{pbintro} to the following equation 
\begin{equation}\label{change}
	-\Delta v  + \frac{\gamma}{\gamma+1} \frac{|\nabla v|^2}{v} = (\gamma+1)^{\frac{\gamma+\theta}{\gamma+1}}gv^{\frac{\gamma+\theta}{\gamma+1}} + f,
\end{equation}
which, for $g=0$, was extensively studied in the past, see for instance \cite{aclmop,am,as,dur,gps}. The previous discussion could be formalized and the existence and uniqueness results given in the current paper could provide information regarding problem \eqref{change}.

\medskip

The plan of the paper is the following: in Section \ref{S2} we state and prove the comparison principle and the associated uniqueness result for problems as in \eqref{pbintro} (Theorem \ref{comparison} and Corollary \ref{uniq}). In Section \ref{S3} we give some existence results; precisely we investigate both the mild and the strongly singular case when $q<p-1$ (Theorem \ref{ex1} and Theorem \ref{ex_gamma>1}); moreover we also treat a case in which $q=p-1$ (Theorem \ref{ex2}).

\subsection{Notation} 

In the entire paper $\Omega$ is an open and bounded subset of $\RN$, with $N\geq 1$. We denote by $\partial A$ the boundary and by $|A|$ the Lebesgue measure of a subset $A$ of $\RN$. By $C^k_c(\Omega)$, with $k\geq 1$, we mean the space of $C^k$ functions with compact support in $\Omega$. \\
For any $q>1$, $q':= \frac{q}{q-1}$ is the H\"older conjugate exponent of $q$, while for any $1\leq p<N$, $p^*=\frac{Np}{N-p}$ is the Sobolev conjugate exponent of $p$. \\
We denote by $\chi_E$ the characteristic function of $E\subset\Omega$, namely 
$$
\chi_E(x)=
\begin{cases} 
1 & x\in E,\\
0 & x\in \Omega\setminus E,
\end{cases}
$$
and by $f^+:=\max(f,0), f^-:= -\min(f,0)$ the positive and the negative part of a function $f$. 
We will widely use the following function defined for a fixed $k>0$ and $s\in\R$
\begin{equation}
\label{defTk}
T_k(s)=\max (-k,\min (s,k)), 
\end{equation}
and
\begin{equation}
\label{Vdelta}
\displaystyle
V_{\delta}(s)=
\begin{cases}
1 \ \ &s\le \delta, \\
\displaystyle\frac{2\delta-s}{\delta} \ \ &\delta <s< 2\delta, \\
0 \ \ &s\ge 2\delta.
\end{cases}
\end{equation}
 If no otherwise specified, we will denote by $C$ several constants whose value may change from line to line. These values will only depend on the data (for instance $C$ may depend on $\Omega$, $N$ and $p$) but they will never depend on the indexes of the sequences we will often introduce. 

\section{Comparison principle and uniqueness}
\label{S2}

Let $1<p<N$ and let us consider the following problem
\begin{equation}
\label{pbmain}
\begin{cases}
\displaystyle -\Delta_p u = F(x,u) & \text{ in }\Omega,\\
u> 0 & \text{ in } \Omega,\\
u=0 & \text{ on } \partial\Omega,
\end{cases}
\end{equation}
where the nonlinearity $F:\Omega\times (0,\infty)\to [0,\infty)$ is a general Carath\'eodory function.

We start specifying the notion of weak solution to \eqref{pbmain}. 
\begin{defin}
\label{defweak}
A positive function $u\in W^{1,p}_0(\Omega)$ is a weak solution to \eqref{pbmain} if $F(x,u)\in L^1_{\rm loc}(\Omega)$ and if

\begin{equation}
\label{def3}
\displaystyle\io |\nabla u|^{p-2}\nabla u\cdot \nabla \varphi=\io F(x,u)\varphi, \ \forall \varphi \in C^1_c(\Omega).
\end{equation}
\end{defin}
In order to deal with uniqueness of solutions, we present a comparison principle for solutions to \eqref{pbmain} provided the right hand side enjoys some monotonicity condition.
In particular let us consider $v_1,v_2$ solutions to 
\begin{equation}
\label{pbmain2}
\begin{cases}
\displaystyle -\Delta_p v_i = G_i(x,v) & \text{ in }\Omega,\\
v_i> 0 & \text{ in } \Omega,\\
v_i=0 & \text{ on } \partial\Omega,
\end{cases}
\end{equation}
where the nonlinearities $G_1,G_2:\Omega\times (0,\infty)\to [0,\infty)$ are Carath\'eodory functions. 
We state the main result of this section. 
\begin{theorem}[Comparison Principle]
\label{comparison}
Let us assume $G_1, G_2$ are nonnegative functions such that either $G_1(x,s)s^{1-p}$ or $G_2(x,s)s^{1-p}$ is decreasing with respect to $s$ and for almost every $x\in \Omega$ and
\begin{equation}
\label{hyp}
G_1(x,s)\leq G_2(x,s)
\end{equation}
for almost every $x\in \Omega$ and for all $s\in (0,\infty)$.
Let $v_1$ and $v_2$ be weak solutions to problem \eqref{pbmain2} with data, respectively, $G_1,G_2$ then $v_1\leq v_2$ almost everywhere in $\Omega$.
\end{theorem}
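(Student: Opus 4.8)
The plan is to test the two weak formulations against a carefully chosen truncated, regularized quotient of the solutions, in the spirit of the classical Brezis–Oswald argument, adapted to the $p$-Laplacian and to possibly unbounded solutions. Assume without loss of generality that it is $G_2(x,s)s^{1-p}$ which is decreasing in $s$ (the other case is symmetric). The natural test function is built from $v_1^p/(v_2+\eps)^{p-1}$ in the equation for $v_2$ and from $v_2^p/(v_1+\eps)^{p-1}$ (or simply $v_1$) in the equation for $v_1$, then subtract; here $\eps>0$ avoids division by something vanishing at the boundary, and since the solutions need not be bounded we must also insert truncations $T_k$ and a cutoff like $V_\delta$ near $\partial\Omega$ to make the test functions admissible in $W^{1,p}_0(\Omega)\cap L^\infty$. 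First I would fix $\eps,\delta,k$, verify the truncated test functions lie in $W^{1,p}_0(\Omega)\cap L^\infty(\Omega)$ (using $v_i\in W^{1,p}_0(\Omega)$ and a density/Definition~\ref{defweak} argument to extend \eqref{def3} to such test functions), and write down the resulting integral identity.

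The heart of the matter is the algebraic/analytic inequality on the diffusion terms: after the subtraction one obtains, on the right-hand side, an expression of the form
\begin{equation*}
\io \left[\frac{G_1(x,v_1)}{v_1^{p-1}} - \frac{G_2(x,v_2)}{v_2^{p-1}}\right]\big(v_1^p - v_2^p\big)^+ \,,
\end{equation*}
(modulo the regularizations), and on the left-hand side the difference of the $p$-Laplacian terms tested against $v_1^p/(v_2)^{p-1} - v_1$ and $v_2^p/(v_1)^{p-1}-v_2$. The key point is the \emph{Picone-type inequality}: for $1<p$ and positive differentiable $a,b$,
\begin{equation*}
|\nabla a|^{p-2}\nabla a \cdot \nabla\!\left(\frac{b^p}{a^{p-1}}\right) \le |\nabla b|^p,
\end{equation*}
with equality essentially only when $b/a$ is constant. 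Applying this with $(a,b)=(v_2+\eps,\,v_1)$ and $(a,b)=(v_1+\eps,\,v_2)$ and summing shows that the left-hand side is $\le 0$ on the region $\{v_1>v_2\}$ after passing $\eps,\delta\to 0$. On the right-hand side, on the set $\{v_1>v_2\}$ the monotonicity hypothesis gives $G_2(x,v_1)v_1^{1-p}\le G_2(x,v_2)v_2^{1-p}$, and \eqref{hyp} gives $G_1(x,v_1)\le G_2(x,v_1)$, so the bracket is $\le 0$ there, hence the whole right-hand side is $\le 0$ as well while the left-hand side is $\ge 0$ by the reverse reading. Comparing signs forces the integral over $\{v_1>v_2\}$ of a nonnegative Picone defect to vanish, whence $\nabla(v_1/v_2)=0$ on that set; combined with the zero boundary data this yields $|\{v_1>v_2\}|=0$.

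The main obstacle I expect is the passage to the limit in the regularizations, i.e.\ justifying that the truncation parameter $k\to\infty$ and the boundary cutoff $V_\delta\to 1$ and $\eps\to 0$ can all be removed while keeping every term controlled: the quotients $v_1^p/(v_2+\eps)^{p-1}$ are only in $W^{1,p}_0$ after truncation, and one must show the ``bad'' terms produced by $\nabla T_k$ and $\nabla V_\delta$ either have a sign or vanish in the limit (using $v_i\in W^{1,p}_0(\Omega)$, Fatou on the nonnegative Picone-defect integrand, and dominated convergence with $L^1_{\rm loc}$-integrability of $G_i(x,v_i)$ on the right). A secondary technical point is checking that the singular terms $G_i(x,v_i)$ times the test function are integrable near $\partial\Omega$, which is where the cutoff $V_\delta$ and the fact that the test function is bounded by a multiple of $v_1$ are used. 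Once these limiting arguments are in place, the sign comparison closes the proof.
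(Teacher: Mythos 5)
Your proposal follows essentially the same route as the paper's proof: a Brezis--Oswald/Diaz--Saa argument that extends the weak formulation to $W^{1,p}_0$-test functions by density, tests the two equations with the $\varepsilon$-shifted, $T_k$-truncated quotients (the paper's $\psi_1,\psi_2$ in \eqref{phipsi} are exactly your $v_1^p/(v_2+\varepsilon)^{p-1}$- and $v_2^p/(v_1+\varepsilon)^{p-1}$-type functions, localized to $\{v_1>v_2\}$ via the positive part), controls the principal part by the convexity/Picone inequality, and removes the regularizations by Fatou/dominated convergence. Two small remarks: the boundary cutoff $V_\delta$ is superfluous here, since once the formulation admits all of $W^{1,p}_0(\Omega)$ it suffices to check $\psi_1,\psi_2\in W^{1,p}_0(\Omega)$ (the paper's Remark \ref{insob}); and the integrability of the singular terms against the test functions comes from the pointwise bound $\psi_2\le p\,v_1$ together with $G_1(x,v_1)v_1\in L^1(\Omega)$, not from a boundary cutoff.

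The one step I would not accept as written is your conclusion. You argue that the Picone defect vanishes, deduce $\nabla(v_1/v_2)=0$ on $\{v_1>v_2\}$, and then invoke the boundary data. For Sobolev solutions $\{v_1>v_2\}$ is only a measurable set --- neither open nor connected in general --- so ``vanishing gradient of the quotient on that set plus zero trace'' does not yield $|\{v_1>v_2\}|=0$ without a genuine additional argument (one would want to work with something like $(\log v_1-\log v_2)^+$, whose membership in a Sobolev space with zero trace is itself unclear when the $v_i$ are unbounded and degenerate at $\partial\Omega$). The clean closure, and the one the paper uses, comes from the other half of your sign comparison: the limit identity gives
\begin{equation*}
0\le \int_\Omega \Bigl(\frac{G_1(x,v_1)}{v_1^{p-1}}-\frac{G_2(x,v_2)}{v_2^{p-1}}\Bigr)(v_1^p-v_2^p)^+ ,
\end{equation*}
while \eqref{hyp} and the strict decrease of whichever $G_i(x,s)s^{1-p}$ is assumed monotone make the bracket strictly negative wherever $v_1>v_2$; hence $(v_1^p-v_2^p)^+\equiv 0$ directly, with no recourse to the equality case of Picone. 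With that substitution your argument coincides with the paper's.
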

As a simple corollary of the previous result, one has that uniqueness holds for weak solutions to \eqref{pbmain}.
\begin{corollary}[Uniqueness]
\label{uniq}
Let us assume that $F$ is a nonnegative function such that $F(x,s)s^{1-p}$ is decreasing with respect to $s$ and for almost every $x \in \Omega$. Then there exists at most one weak solution to problem \eqref{pbmain}.
\end{corollary}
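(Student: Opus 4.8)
The plan is to obtain the corollary as a direct consequence of the comparison principle, Theorem \ref{comparison}, by invoking it twice with the two candidate solutions in interchangeable roles. Concretely, I would let $u_1$ and $u_2$ be two weak solutions of \eqref{pbmain} in the sense of Definition \ref{defweak} and then read both of them as solutions of the coupled problem \eqref{pbmain2} with the \emph{same} datum $F$, so that the uniqueness statement becomes the special case $G_1=G_2=F$ of Theorem \ref{comparison}.

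First I would check that the hypotheses of Theorem \ref{comparison} are met with this choice. Setting $G_1:=F$ and $G_2:=F$: the function $F$ is nonnegative by assumption; the quotient $G_i(x,s)s^{1-p}=F(x,s)s^{1-p}$ is decreasing in $s$ for a.e.\ $x\in\Omega$, so in particular the alternative ``$G_1(x,s)s^{1-p}$ or $G_2(x,s)s^{1-p}$ is decreasing'' is satisfied; and \eqref{hyp} holds trivially since $G_1(x,s)=F(x,s)\le F(x,s)=G_2(x,s)$ for a.e.\ $x\in\Omega$ and all $s\in(0,\infty)$. Since $u_1$ is a weak solution of \eqref{pbmain2} with datum $G_1$ and $u_2$ is a weak solution of \eqref{pbmain2} with datum $G_2$, Theorem \ref{comparison} yields $u_1\le u_2$ almost everywhere in $\Omega$.

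Then I would repeat the same argument after swapping the two solutions, i.e.\ regarding $u_2$ as $v_1$ (with datum $G_1=F$) and $u_1$ as $v_2$ (with datum $G_2=F$); the verification of the hypotheses is identical, and Theorem \ref{comparison} now gives $u_2\le u_1$ almost everywhere in $\Omega$. Combining the two inequalities yields $u_1=u_2$ a.e.\ in $\Omega$, which is the claimed uniqueness. There is no genuine obstacle here beyond noting that Theorem \ref{comparison} only requires \emph{one} of the two right-hand sides to have a decreasing quotient $G_i(x,s)s^{1-p}$, a condition automatically fulfilled since both data coincide with $F$; the whole content of the corollary is already encoded in Theorem \ref{comparison}.
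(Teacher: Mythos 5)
Your proposal is correct and is exactly the argument the paper intends: the corollary is obtained by applying Theorem \ref{comparison} twice with $G_1=G_2=F$, swapping the roles of the two solutions to get both inequalities $u_1\le u_2$ and $u_2\le u_1$. Nothing further is needed.
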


\begin{remark}
	Just to give an idea, Corollary \ref{uniq} gives uniqueness of solutions to \eqref{pbmain} when $F$ is modelled  by 
	$$\dis F(x,s)=\frac{f(x)}{s^\gamma}+g(x)s^q, \text{ with } f+g>0 \text{ a.e. in } \Omega,$$
	or by
	$$\dis F(x,s)=\frac{f(x)}{s^\gamma}+g(x) s^{p-1}, \text{ with } f>0 \text{ a.e. in } \Omega,$$
	where $f,g$ are nonnegative functions defined almost everywhere, $\gamma\geq 0$ and $0\leq q<p-1$.
\end{remark}

\subsection{Proof of the comparison principle}
\label{S2bis}

In this section we prove the comparison principle for weak solutions to problem \eqref{pbmain} and, as a consequence, we deduce the uniqueness result, namely Corollary \ref{uniq}. 
\begin{proof}[Proof of Theorem \ref{comparison}]
First of all we need to show that for any weak solution $u$ to \eqref{pbmain}, the formulation \eqref{def3} can be extended for $W^{1,p}$-test functions.
We consider a nonnegative $\varphi\in W^{1,p}_0(\Omega)$ and a sequence of nonnegative functions $\varphi_{\eta,n}\in C^1_c(\Omega)$ such that
\begin{equation*}\label{propapprox}
\begin{cases}
\varphi_{\eta,n} \stackrel{\eta  \to 0}{\to} \varphi_{n} \stackrel{n  \to \infty}{\to} \varphi \ \ \ \text{in } W^{1,p}_0(\Omega)
\\
\supp \varphi_n\subset \subset \Omega: 0\le \varphi_n\le \varphi \ \ \ \text{for all } n\in \mathbb{N}.
\end{cases}
\end{equation*}
An example of such $\varphi_{\eta,n}$ is $\rho_\eta \ast (\varphi \wedge \phi_n)$ ($\varphi \wedge \phi_n:= \inf (\varphi,\phi_n)$) where $\rho_\eta$ is a smooth mollifier and $\phi_n$ is a sequence of nonnegative functions in $C^1_c(\Omega)$ which converges to $\varphi$ in $W^{1,p}_0(\Omega)$.
\\Hence let us take $\varphi_{\eta,n}$ as a test function in \eqref{def3}, yielding to 
\begin{equation*}\label{uni1}
\int_{\Omega} |\nabla u|^{p-2}\nabla u\cdot\nabla 	\varphi_{\eta,n}  = \int_{\Omega}F(x,u) \varphi_{\eta,n}.
\end{equation*} 
We want to pass first $\eta$ to zero and then $n$ to infinity in the previous. 
\\ Since $u\in W^{1,p}_0(\Omega)$ one can pass to the limit the first term recalling that $\varphi_{\eta,n}$ converges to $\varphi_n$ in $W^{1,p}_0(\Omega)$. For the right hand side one has that $F(x,u) \in L^1_{\rm loc}(\Omega)$ that gives that we can pass $\eta\to 0$ since $\varphi_{\eta,n}$ converges $*$-weakly in $L^\infty(\Omega)$ to $\varphi_n$ which has compact support in $\Omega$. Hence we deduce
\begin{equation}\label{uni2}
\int_{\Omega} |\nabla u|^{p-2}\nabla u_\cdot\nabla \varphi_n = \int_{\Omega}F(x,u) \varphi_n.
\end{equation} 	
Now let observe that by the Young inequality
\begin{equation*}\label{uni3}
\int_{\Omega}F(x,u)\varphi_n \le \int_{\Omega} |\nabla u|^{p} + \int_{\Omega}|\nabla \varphi_n|^p,	
\end{equation*}	
and by the Fatou Lemma with respect to $n$, one gets
\begin{equation}\label{l1loc}
\int_{\Omega}F(x,u)\varphi \le C.	
\end{equation}	
Now we take $n\to \infty$ in \eqref{uni2}. For the term on the left hand side we can reason as already done when $\eta \to 0$. For the right hand side of \eqref{uni2} one can easily apply the Lebesgue Theorem since 
$$F(x,u)\varphi_n\le F(x,u)\varphi \overset{\eqref{l1loc}}{\in} L^1(\Omega),$$
which gives
\begin{equation}\label{uni5}
\int_{\Omega} |\nabla u|^{p-2}\nabla u\cdot\nabla \varphi = \int_{\Omega}F(x,u)\varphi,
\end{equation} 		 
for every $\varphi\in W^{1,p}_0(\Omega)$. 

\medskip

Since $v_1$ and $v_2$ are weak solutions to problem \eqref{pbmain2} with data $G_1,G_2$ then, recalling \eqref{uni5}, one  can test both equations with $W^{1,p}_0$-functions. From here we suppose that $G_1(x,s)s^{1-p}$ is decreasing with respect to $s$ for almost every $x \in \Omega$; if one is in the other case, then slight modifications will be needed.\\
Let us fix $\varepsilon >0$ and $k\in\N$ and let us define 
$$
A_{k,\varepsilon}:=\left\{x\in\Omega : 0\leq (v_1(x)+\varepsilon)^p-(v_2(x)+\varepsilon)^p\leq k\right\}, \quad A^c_{k,\varepsilon}=\Omega\setminus A_{k,\varepsilon},
$$
and
$$
A_k=\left\{x\in\Omega : 0\leq v_1(x)^p-v_2(x)^p\leq k\right\}, \quad A^c_k=\Omega\setminus A_k.
$$
We consider the following two functions:
\begin{equation}
\label{phipsi}
\psi_1=\frac{T_k(((v_1+\varepsilon)^p-(v_2+\varepsilon)^p)^+)}{(v_1+\varepsilon)^{p-1}}, \quad 
\psi_2=\frac{T_k(((v_1+\varepsilon)^p-(v_2+\varepsilon)^p)^+)}{(v_2+\varepsilon)^{p-1}},
\end{equation}
where $T_k$ is defined by \eqref{defTk}. Let us also underline that $\psi_1,\psi_2 \in W^{1,p}_0(\Omega)$ (see Remark \ref{insob} below). One has 
\begin{eqnarray*}
\label{gradphi}
\nabla\psi_1 & = & \left(\nabla v_1-p\left(\frac{v_2+\varepsilon}{v_1+\varepsilon}\right)^{p-1}\nabla v_2+(p-1)\left(\frac{v_2+\varepsilon}{v_1+\varepsilon}\right)^p\nabla v_1\right)\chi_{A_{k,\varepsilon}} \nonumber \\
&&-(p-1)\frac{T_k(((v_1+\varepsilon)^p-(v_2+\varepsilon)^p)^+)}{(v_1+\varepsilon)^p}\nabla v_1 \chi_{A^c_{k,\varepsilon}},
\end{eqnarray*}
and
\begin{eqnarray*}
\label{gradpsi}
\nabla\psi_2 & = & -\left(\nabla v_2-p\left(\frac{v_1+\varepsilon}{v_2+\varepsilon}\right)^{p-1}\nabla v_1+(p-1)\left(\frac{v_1+\varepsilon}{v_2+\varepsilon}\right)^p\nabla v_2\right)\chi_{A_{k,\varepsilon}} \nonumber \\
&&-(p-1)\frac{T_k(((v_1+\varepsilon)^p-(v_2+\varepsilon)^p)^+)}{(v_2+\varepsilon)^p}\nabla v_2 \chi_{A^c_{k,\varepsilon}}.
\end{eqnarray*}
We choose $\psi_1$ and $\psi_2$ as test functions in equations solved by, respectively, $v_1$ and $v_2$ and we subtract them yielding to
$$
\begin{aligned}
&\int_{A_{k,\varepsilon}}\left(|\nabla v_1|^p-\left(\frac{v_1+\varepsilon}{v_2+\varepsilon}\right)^p\left|\nabla v_2\right|^p-p\left(\frac{v_1+\varepsilon}{v_2+\varepsilon}\right)^{p-1}|\nabla v_2|^{p-2}\nabla v_2\cdot \left(\nabla v_1-\left(\frac{v_1+\varepsilon}{v_2+\varepsilon}\right)\nabla v_2\right)\right) \nonumber \\
&+\int_{A_{k,\varepsilon}}\left(|\nabla v_2|^p-\left(\frac{v_2+\varepsilon}{v_1+\varepsilon}\right)^p\left|\nabla v_1\right|^p-p\left(\frac{v_2+\varepsilon}{v_1+\varepsilon}\right)^{p-1}|\nabla v_1|^{p-2}\nabla v_1\cdot \left(\nabla v_2-\left(\frac{v_2+\varepsilon}{v_1+\varepsilon}\right)\nabla v_1\right)\right) \nonumber \\
&+(p-1)\int_{A^c_{k,\varepsilon}}\left(\frac{T_k(((v_1+\varepsilon)^p-(v_2+\varepsilon)^p)^+)}{(v_2+\varepsilon)^p}|\nabla v_2|^p-\frac{T_k(((v_1+\varepsilon)^p-(v_2+\varepsilon)^p)^+)}{(v_1+\varepsilon)^p}|\nabla v_1|^p\right) \nonumber \\
&\leq \int_{\Omega}\left(\frac{G_1(x,v_1)}{(v_1+\varepsilon)^{p-1}}-\frac{G_2(x,v_2)}{(v_2+\varepsilon)^{p-1}}\right)T_k(((v_1+\varepsilon)^p-(v_2+\varepsilon)^p)^+).
\end{aligned}
$$
Now using the following classical estimate due to the convexity of the power function (recall that $p>1$)
$$|\xi|^p-|\eta|^p-p|\eta|^{p-2}\eta\cdot (\xi-\eta)\geq 0, \quad \forall \xi,\eta \in \RN,$$
one has 
\begin{equation}\label{dis1}
\begin{aligned}
\lefteqn{(p-1)\int_{A^c_{k,\varepsilon}}\frac{T_k(((v_1+\varepsilon)^p-(v_2+\varepsilon)^p)^+)}{(v_2+\varepsilon)^p}|\nabla v_2|^p}  \\
&\leq  (p-1)\int_{A^c_{k,\varepsilon}}\frac{T_k(((v_1+\varepsilon)^p-(v_2+\varepsilon)^p)^+)}{(v_1+\varepsilon)^p}|\nabla v_1|^p  \\
&+ \int_{\Omega}\left(\frac{G_1(x,v_1)}{(v_1+\varepsilon)^{p-1}}-\frac{G_2(x,v_2)}{(v_2+\varepsilon)^{p-1}}\right)T_k(((v_1+\varepsilon)^p-(v_2+\varepsilon)^p)^+).
\end{aligned}
\end{equation}
Noting that the first term of \eqref{dis1} is nonnegative, we have 
\begin{equation}
\begin{aligned}
\label{dis2}
0 &\leq  (p-1)\int_{A^c_{k,\varepsilon}}\frac{T_k(((v_1+\varepsilon)^p-(v_2+\varepsilon)^p)^+)}{(v_1+\varepsilon)^p}|\nabla v_1|^p  \\
&+ \int_{\Omega}\left(\frac{G_1(x,v_1)}{(v_1+\varepsilon)^{p-1}}-\frac{G_2(x,v_2)}{(v_2+\varepsilon)^{p-1}}\right)T_k(((v_1+\varepsilon)^p-(v_2+\varepsilon)^p)^+)  \\
&\stackrel{\eqref{hyp}}\leq (p-1)\int_{A^c_{k,\varepsilon}}\frac{T_k(((v_1+\varepsilon)^p-(v_2+\varepsilon)^p)^+)}{(v_1+\varepsilon)^p}|\nabla v_1|^p  \\
&+ \int_{\Omega}\left(\frac{G_1(x,v_1)}{(v_1+\varepsilon)^{p-1}}-\frac{G_1(x,v_2)}{(v_2+\varepsilon)^{p-1}}\right)T_k(((v_1+\varepsilon)^p-(v_2+\varepsilon)^p)^+).
\end{aligned}
\end{equation}
Denoting $r_{k,\varepsilon},\tilde{r}_{k,\varepsilon}$ as follows
\begin{eqnarray*}
r_{k,\varepsilon} & = &(p-1)\frac{T_k(((v_1+\varepsilon)^p-(v_2+\varepsilon)^p)^+)}{(v_1+\varepsilon)^p}|\nabla v_1|^p\chi_{A^c_{k,\varepsilon}} \\
&& + \left(\frac{G_1(x,v_1)}{(v_1+\varepsilon)^{p-1}}-\frac{G_1(x,v_2)}{(v_2+\varepsilon)^{p-1}}\right)T_k(((v_1+\varepsilon)^p-(v_2+\varepsilon)^p)^+),
\end{eqnarray*}
and
\begin{eqnarray*}
\tilde{r}_{k,\varepsilon} &=& (p-1)\frac{T_k(((v_1+\varepsilon)^p-(v_2+\varepsilon)^p)^+)}{(v_1+\varepsilon)^p}|\nabla v_1|^p\chi_{A^c_{k,\varepsilon}} \\
&&+\frac{G_1(x,v_1)}{(v_1+\varepsilon)^{p-1}}T_k(((v_1+\varepsilon)^p-(v_2+\varepsilon)^p)^+),
\end{eqnarray*}
then one has
\begin{equation}
\label{dis3}
0\leq r_{k,\varepsilon}^+\leq \tilde{r}_{k,\varepsilon}.
\end{equation}
Since $v_1,v_2$ are positive then one has that $r_{k,\varepsilon}^+$ ($r_{k,\varepsilon}^-$) converges to $r_k^+$ ($r_k^-$ resp.) and $\tilde{r}_{k,\varepsilon}$ converges to $\tilde{r}_k$ almost everywhere in $\Omega$, where
\begin{eqnarray*}
\label{defrk}
r_k &=& (p-1)\frac{T_k((v_1^p-v_2^p)^+)}{v_1^p}|\nabla v_1|^p\chi_{A^c_{k}} + \left(\frac{G_1(x,v_1)}{v_1^{p-1}}-\frac{G_1(x,v_2)}{v_2^{p-1}}\right)T_k((v_1^p-v_2^p)^+),
\end{eqnarray*}
and
\begin{eqnarray*}
\label{defrktil}
\tilde{r}_k &=& (p-1)\frac{T_k((v_1^p-v_2^p)^+)}{v_1^p}|\nabla v_1|^p\chi_{A^c_{k}} + \frac{G_1(x,v_1)}{v_1^{p-1}}T_k((v_1^p-v_2^p)^+).
\end{eqnarray*}
Moreover, using that $T_k(s)\leq s$ for $s\geq 0$, we deduce that
\begin{equation}
\label{dis4}
\frac{T_k(((v_1+\varepsilon)^p-(v_2+\varepsilon)^p)^+)}{(v_1+\varepsilon)^p}\chi_{A^c_{k,\varepsilon}}\leq 1,
\end{equation}
and 
\begin{equation}
\label{dis5}
\frac{T_k(((v_1+\varepsilon)^p-(v_2+\varepsilon)^p)^+)}{(v_1+\varepsilon)^{p-1}} \leq \frac{(v_1+\varepsilon)^p-\varepsilon^p}{(v_1+\varepsilon)^{p-1}} \leq pv_1,
\end{equation}
where the last inequality holds by means of the Langrange Theorem.\\
It follows that
\begin{equation}
\label{dis6}
\tilde{r}_{k,\varepsilon}\stackrel{\eqref{dis4},\eqref{dis5}}\leq (p-1)|\nabla v_1|^p+pG_1(x,v_1)v_1.
\end{equation}
Since $v_1\in W^{1,p}_0(\Omega)$ and from \eqref{l1loc} one has that the right hand side of \eqref{dis6} belongs to $L^1(\Omega)$. This implies, applying the Lebesgue Theorem, that $\tilde{r}_{k,\varepsilon}$ strongly converges to $\tilde{r}_k$ in $L^1(\Omega)$. Now starting from \eqref{dis3} and applying the Vitali Theorem, we obtain that
\begin{equation}
\label{dis7}
r^+_{k,\varepsilon}\to r^+_k \text{ strongly in } L^1(\Omega).
\end{equation}
As regards $r^-_{k,\varepsilon}$, applying the Fatou Lemma, we have
\begin{equation}
\label{dis8}
\limsup_{\varepsilon\to 0} \int_\Omega -r_{k,\varepsilon}^-\leq \int_{\Omega}-r_k^-.
\end{equation}
Hence we deduce that
\begin{equation*}
\displaystyle 0\stackrel{\eqref{dis2}}\leq \limsup_{\varepsilon\to 0} \int_\Omega r_{k,\varepsilon}=\limsup_{\varepsilon\to 0} \int_\Omega (r_{k,\varepsilon}^+-r_{k,\varepsilon}^-)\stackrel{\eqref{dis7},\eqref{dis8}}\leq \int_{\Omega} (r_k^+-r_k^-)=\int_\Omega r_k.
\end{equation*}
Thus, until now, we have shown that
\begin{equation}
\label{dis9}
0\leq \int_\Omega \left((p-1)\frac{T_k((v_1^p-v_2^p)^+)}{v_1^p}|\nabla v_1|^p\chi_{A^c_{k}} + \left(\frac{G_1(x,v_1)}{v_1^{p-1}}-\frac{G_1(x,v_2)}{v_2^{p-1}}\right)T_k((v_1^p-v_2^p)^+)\right).
\end{equation}
Now we pass to the limit in \eqref{dis9} as $k$ tends to infinity. We note that $\chi_{A^c_k}$ tends to $0$ as $k$ tends to infinity. Moreover, using \eqref{dis4} with $\varepsilon=0$, we have
$$
\frac{T_k((v_1^p-v_2^p)^+)}{v_1^p}|\nabla v_1|^p\chi_{A^c_{k}}\leq |\nabla v_1|^p \in L^1(\Omega),
$$
since $v_1\in W^{1,p}_0(\Omega)$.
This implies, applying the Lebesgue Theorem, that 
\begin{equation}
\label{dis10}
\frac{T_k((v_1^p-v_2^p)^+)}{v_1^p}|\nabla v_1|^p\chi_{A^c_{k}} \to 0 \text{ strongly in } L^1(\Omega).
\end{equation}
As regards the second term in the right hand side of \eqref{dis9}, from $G_1(x,s)s^{1-p}$ decreasing with respect to $s$, one has that 
\begin{equation}
\label{dis11}
0\leq -\left(\frac{G_1(x,v_1)}{v_1^{p-1}}-\frac{G_1(x,v_2)}{v_2^{p-1}}\right)T_k((v_1^p-v_2^p)^+), 
\end{equation}
where the right hand side of \eqref{dis11} is increasing in $k$. Applying Beppo Levi's Theorem, we obtain that
\begin{equation}
\label{dis12}
\displaystyle \lim_{k\to \infty}\int_\Omega\left(\frac{G_1(x,v_1)}{v_1^{p-1}}-\frac{G_1(x,v_2)}{v_2^{p-1}}\right)T_k((v_1^p-v_2^p)^+) = \int_\Omega \left(\frac{G_1(x,v_1)}{v_1^{p-1}}-\frac{G_1(x,v_2)}{v_2^{p-1}}\right)(v_1^p-v_2^p)^+.
\end{equation}
By passing to the limit as $k$ tends to infinity in \eqref{dis9}, using \eqref{dis10} and \eqref{dis12}, we have
\begin{equation}
\label{dis13}
0\leq \int_\Omega \left(\frac{G_1(x,v_1)}{v_1^{p-1}}-\frac{G_1(x,v_2)}{v_2^{p-1}}\right)(v_1^p-v_2^p)^+.
\end{equation}
Furthermore from the fact that $G_1(x,s)s^{1-p}$ is decreasing with respect to $s$, one yields to 
\begin{equation*}
\label{dis14}
\left(\frac{G_1(x,v_1)}{v_1^{p-1}}-\frac{G_1(x,v_2)}{v_2^{p-1}}\right)(v_1^p-v_2^p)^+\leq 0\quad \text{a.e. in }\Omega,
\end{equation*}
which, gathered with \eqref{dis13}, gives that $(v_1^p-v_2^p)^+\equiv 0$, that is $v_1\leq v_2$ almost everywhere in $\Omega$.
\end{proof}

\begin{remark}
\label{insob}
Here we show that $\psi_1, \psi_2$ defined by \eqref{phipsi} belong to $\sob$. We focus on $\psi_2$. As a consequence of Lemma $1.1$ contained in \cite{sta} and the fact the $v_1,v_2$ have finite energy, we have that the function $\psi_h$ defined as
$$
\psi_h=\frac{T_k(((v_1+\varepsilon)^p-(T_h(v_2+\varepsilon))^p)^+)}{(v_2+\varepsilon)^{p-1}}
$$
belongs to $\sob$ for every $h\geq 0$. Moreover, by computing its gradient, we get 
\begin{eqnarray*}
\displaystyle \nabla\psi_h & = & -p\nabla v_2\chi_{\{v_2+\varepsilon\leq h\}\cap A_{k,\varepsilon,h}\cap B_h}+p\left(\frac{v_1+\varepsilon}{v_2+\varepsilon}\right)^{p-1}\nabla v_1\chi_{ A_{k,\varepsilon,h}\cap B_h} \\
&&-(p-1)\frac{T_k(((v_1+\varepsilon)^p-(T_h(v_2+\varepsilon))^p)^+)}{(v_2+\varepsilon)^{p}}\nabla v_2,
\end{eqnarray*}
where
$$
A_{k,\varepsilon,h}=\left\{x\in\Omega : 0\leq (v_1(x)+\varepsilon)^p-(T_h(v_2(x)+\varepsilon))^p\leq k\right\}
$$
and
$$
B_h=\left\{x\in\Omega : v_1(x)+\varepsilon\geq T_h(v_2(x)+\varepsilon)\right\}.
$$
It follows from the definition of $A_{k,\varepsilon,h}$ that 
\begin{equation*}
\label{dis15}
\left(\frac{v_1+\varepsilon}{v_2+\varepsilon}\right)^{p-1}\leq \left(\frac{k}{\varepsilon^p}+1\right)^\frac{p-1}{p}.
\end{equation*}
This implies that
$$
|\nabla \psi_h|^p\leq C(p,k,\varepsilon)\left(|\nabla v_2|^p+|\nabla v_1|^p\right),
$$
with $C(p,k,\varepsilon)$ a positive constant dependent only on $p,k,\varepsilon$. Hence, using $v_1,v_2 \in W^{1,p}_0(\Omega)$, we deduce that $\{\psi_h\}$ is bounded in $\sob$ uniformly in $h$. Moreover $\psi_h$ converges to $\psi_2$ almost everywhere in $\Omega$. So that $\psi_h$ converges to $\psi_2$ weakly in $\sob$ and $\psi_2$ belongs to $\sob$. As regards $\varphi$, in a similar way it is possible to prove that $\psi_1$ belongs to $\sob$.
\end{remark}


\section{Existence results in some model equations}
\label{S3}

In this section we give existence results to \eqref{pbmain} for some explicit nonlinearities $F$ of the following form
\begin{equation}
\label{exF1}
F(x,s)=f(x)h(s)+g(x)k(s),
\end{equation}
where $f,g$ are nonnegative functions belonging to suitable Lebesgue space, with $f\not\equiv 0$, and $h,k:(0,\infty)\to[0,\infty)$ are continuous nonnegative functions such that
\begin{equation}
\label{ph}
\exists\,\, \gamma\geq 0, \underline{C}>0: h(s)\leq \frac{\underline{C}}{s^\gamma} \quad \forall s\in(0,\infty), 
\end{equation}
and
\begin{equation}
\label{pk}
\exists\,\, q\ge 0, \overline{C}>0: k(s)\leq\overline{C}s^q \quad \forall s\in(0,\infty).
\end{equation}

\begin{remark}
	\label{kcont}
	Let us observe that \eqref{pk} implies that $k$ can be extended by continuity at $0$ defining $k(0)=0$.
\end{remark}

We underline that we are not assuming any kind of monotonicity on the functions $h,k$ but just some control from the above. Moreover, the case of continuous and bounded $h,k$ are well contained in our existence result. 
\\
For the sake of clarity we reformulate the problem under the assumption \eqref{exF1}:
\begin{equation}
\label{pb1}
\begin{cases}
\displaystyle -\Delta_p u = f(x)h(u)+g(x)k(u) & \text{ in }\Omega,\\
u> 0 & \text{ in } \Omega,\\
u=0 & \text{ on } \partial\Omega.
\end{cases}
\end{equation}

At first we state an existence result in case $\gamma\le 1$ and  $q<p-1$, which we recall that corresponds to the sublinear case when $p=2$ ; let us explicitly note that in the sequel we define $\left(\frac{p^*}{1-\gamma}\right)':=1$ if $\gamma =1$.

In particular one has the following result.
\begin{theorem}
	\label{ex1}
	Let $f\in \displaystyle L^{\left(\frac{p^*}{1-\gamma}\right)'}(\Omega)$ be a nonnegative function not identically zero and let $g\in L^{\left(\frac{p^*}{1+q}\right)'}(\Omega)$ be a nonnegative function. Let $h$ and $k$ be nonnegative continuous functions satisfying \eqref{ph} with $\gamma \le 1$ and \eqref{pk} with $q<p-1$ respectively. Then there exists at least one weak solution to problem \eqref{pb1}.
\end{theorem}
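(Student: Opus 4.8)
The plan is to construct a solution to \eqref{pb1} by a standard approximation-and-compactness scheme, truncating the singular and the sublinear nonlinearities simultaneously. First I would fix $n\in\N$ and consider the approximating problems
\begin{equation*}
\begin{cases}
\displaystyle -\Delta_p u_n = f_n(x)\,h_n(u_n) + g_n(x)\,k(u_n) & \text{ in }\Omega,\\
u_n = 0 & \text{ on } \partial\Omega,
\end{cases}
\end{equation*}
where $f_n := T_n(f)$, $g_n := T_n(g)$, and $h_n(s) := h\bigl(s+\tfrac1n\bigr)$ (or $\min(h(s),n)$ suitably modified near $0$) so that the right-hand side is bounded and continuous in $s\in[0,\infty)$ and bounded in $x$. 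Existence of a nonnegative $u_n\in W^{1,p}_0(\Omega)\cap L^\infty(\Omega)$ for each such problem follows from classical results (Schauder fixed point combined with the a priori bound from boundedness of the data, or monotone operator theory once the right-hand side is replaced by $F_n(x,u_n)$ which is then an $L^\infty$ datum). A comparison/maximum principle argument with the fixed function solving $-\Delta_p w = f_1 h_1(w)$ gives a strict positivity estimate: for every $\omega\subset\subset\Omega$ there is $c_\omega>0$, independent of $n$, with $u_n\geq c_\omega$ in $\omega$; this uses $f\not\equiv0$ and is exactly where the monotone-in-$n$ structure of $f_n,h_n$ on a fixed subdomain is exploited. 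This uniform positivity is what will let us pass to the limit in the singular term.

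Next I would derive the key a priori estimate in $W^{1,p}_0(\Omega)$. Testing the approximating equation with $u_n$ itself and using \eqref{ph}, \eqref{pk} gives
\begin{equation*}
\int_\Omega |\nabla u_n|^p \leq \underline{C}\int_\Omega f\, u_n^{1-\gamma} + \overline{C}\int_\Omega g\, u_n^{1+q}.
\end{equation*}
By H\"older with exponents tuned to the Sobolev embedding $W^{1,p}_0(\Omega)\hookrightarrow L^{p^*}(\Omega)$, the first integral is bounded by $\|f\|_{(p^*/(1-\gamma))'}\,\|u_n\|_{p^*}^{1-\gamma}$ and the second by $\|g\|_{(p^*/(1+q))'}\,\|u_n\|_{p^*}^{1+q}$; when $\gamma=1$ the first term is simply $\underline C\|f\|_{1}$. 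Since $1-\gamma<p$ (indeed $\leq p$ with equality impossible here as $\gamma\ge0$) and $1+q<p$, both powers of $\|u_n\|_{p^*}$ appearing on the right are strictly less than $p$, so a Young-inequality absorption yields a bound $\|u_n\|_{W^{1,p}_0(\Omega)}\leq C$ uniform in $n$. This is the precise point where the hypotheses $\gamma\leq1$, $q<p-1$ and the stated Lebesgue exponents of $f,g$ are used, and it is the heart of the argument.

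From the uniform bound I extract a subsequence with $u_n\rightharpoonup u$ in $W^{1,p}_0(\Omega)$, $u_n\to u$ a.e. and in $L^r(\Omega)$ for $r<p^*$, with $u\geq c_\omega>0$ on each $\omega\subset\subset\Omega$, hence $u>0$ a.e. in $\Omega$. The remaining work is to pass to the limit in the weak formulation against $\varphi\in C^1_c(\Omega)$. For the right-hand side: on $\supp\varphi\subset\subset\Omega$ the lower bound $u_n\geq c_{\supp\varphi}$ makes $h_n(u_n)\leq \underline C\,c_{\supp\varphi}^{-\gamma}$ uniformly bounded there, and $h_n(u_n)\to h(u)$, $k(u_n)\to k(u)$ a.e. (using continuity of $h,k$ on $(0,\infty)$ and Remark \ref{kcont}), so dominated convergence with dominating functions $\underline C c_{\supp\varphi}^{-\gamma}f\,|\varphi|$ and $\overline C\,g\,\|u_n\|_\infty^q$-type bounds — more carefully, $g k(u_n)\varphi$ is dominated using $\|u_n\|_{L^{p^*}}$-bounds and equi-integrability via Vitali — gives convergence of both integrals to $\int f h(u)\varphi+\int g k(u)\varphi$. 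For the operator term one needs strong convergence $\nabla u_n\to\nabla u$ in $(L^p(\Omega))^N$, or at least a.e.; the standard route is to test the difference of equations with $(u_n-u)$ truncated or with $T_\delta(u_n-u)$ and invoke the Leray--Lions/Browder monotonicity trick ($S_+$ property of $-\Delta_p$), using that the right-hand sides are bounded in $L^1_{\rm loc}$ and locally uniformly bounded away from the singularity. The main obstacle I anticipate is precisely this last passage to the limit in the nonlinear principal part combined with controlling the singular term near $\partial\Omega$: one must ensure no mass of $f h(u_n)$ concentrates on the boundary, which is handled by testing with $u_n - T_k(u_n)$ or a $V_\delta$-type cutoff \eqref{Vdelta} and using $\gamma\leq1$ so that $u^{1-\gamma}$ is controlled in $W^{1,p}_0$. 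Once a.e. convergence of the gradients is secured, passing to the limit is routine and yields that $u$ is a weak solution in the sense of Definition \ref{defweak}.
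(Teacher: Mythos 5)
Your approximation scheme, the a priori estimate obtained by testing with $u_n$ and combining H\"older with the Sobolev embedding (this is indeed the heart of the matter, and you use the hypotheses on $\gamma$, $q$, $f$, $g$ exactly as the paper does), and the extraction of a weakly convergent subsequence with a.e. convergent gradients all match the paper's Step 1. The gap is in how you handle the singular term in the limit. You claim a uniform local lower bound $u_n\geq c_\omega>0$ on every $\omega\subset\subset\Omega$ by comparison with the solution of $-\Delta_p w=f_1h_1(w)$, ``exploiting the monotone-in-$n$ structure.'' But the theorem assumes \emph{no monotonicity} on $h$ (the paper stresses this explicitly): $h$ is only continuous with $h(s)\leq \underline{C}s^{-\gamma}$. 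Any comparison of $u_n$ with a fixed subsolution $w$ requires a comparison principle for these nonlinear right-hand sides, and the available one (Theorem \ref{comparison}) needs $f_1(x)h_1(s)s^{1-p}$ to be decreasing in $s$ — which fails for oscillating $h$ (e.g.\ $h(s)=s^{-\gamma}(1+\tfrac{9}{10}\sin(10\log s))$ satisfies \eqref{ph} but $h(s)s^{1-p}$ is not monotone). Without that lower bound your dominated-convergence argument for $\int f_n h_n(u_n)\varphi$ on $\supp\varphi$ has no dominating function, so the passage to the limit in the singular term is not justified as written.

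The paper circumvents this entirely: it never proves local positivity of the approximants. Instead it first uses Fatou to get $(fh(u)+gk(u))\varphi\in L^1(\Omega)$, deduces $\{u=0\}\subset\{f=0\}$ up to a null set, and then splits the right-hand side over $\{u_n\leq\delta\}$ and $\{u_n>\delta\}$. The contribution of $\{u_n\leq\delta\}$ is controlled by testing the approximate equation with $V_\delta(u_n)\varphi$ (the cutoff \eqref{Vdelta}), which shows this piece vanishes in the double limit $n\to\infty$, $\delta\to0$; on $\{u_n>\delta\}$ the nonlinearity is bounded by $\sup_{s>\delta}h(s)$ and Lebesgue/Rellich arguments apply. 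Positivity of the limit $u$ is only recovered at the very end from the strong maximum principle together with $f\not\equiv0$ and $\{u=0\}\subset\{f=0\}$. If you want to keep your route you would need either to add a monotonicity hypothesis on $h$ (which changes the theorem) or to replace the comparison step with the $V_\delta$-decomposition; your closing remark about ``a $V_\delta$-type cutoff'' points in the right direction but is used there only for the principal part, not where it is actually needed.
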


\begin{remark}
	\label{1f0}
	In the case $f\equiv 0$, if $k$ is an increasing function satisfying \eqref{pk}, the existence of a weak solution to \eqref{pb1} is contained in \cite{bo1}.
\end{remark}

Next we deal with the more difficult case of a strong singularity; here, in order to deduce an existence result, we need some regularity on the $\Omega$.

\begin{theorem}\label{ex_gamma>1}
	Let $\Omega$ satisfy the interior ball condition and let $f\in L^m(\Omega)$ with $m> 1$ be a nonnegative  function and let $g\in L^{\left(\frac{p^*}{1+q}\right)'}(\Omega)$ be a nonnegative function.   Let $h$ and $k$ be nonnegative continuous functions satisfying \eqref{ph} with $1<\gamma<2 - \frac{1}{m}$ and \eqref{pk} with $q<p-1$ respectively. Then there exists at least one weak solution to problem \eqref{pb1}.
\end{theorem}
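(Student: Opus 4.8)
The plan is to obtain the solution as the limit of a sequence of approximating problems with truncated and regularized nonlinearities, in the spirit of the approach in \cite{bo} for the purely singular case, combined with the Sobolev-space estimates coming from the term $g(x)k(u)$. Concretely, for $n\in\N$ I would set
$$
\begin{cases}
-\Delta_p u_n = \dfrac{f_n h(u_n)}{1+\frac{1}{n}h(u_n)} + g_n\, k\!\left(u_n + \tfrac1n\right) & \text{in }\Omega,\\
u_n = 0 & \text{on }\partial\Omega,
\end{cases}
$$
where $f_n:=T_n(f)$ and $g_n:=T_n(g)$, so that the right-hand side is bounded and bounded away from the singularity; existence of a nonnegative $u_n\in W^{1,p}_0(\Omega)\cap L^\infty(\Omega)$ for each fixed $n$ follows from Schauder's fixed point theorem (or standard pseudo-monotone operator theory). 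A comparison/maximum-principle argument gives that $\{u_n\}$ is increasing in $n$ and that $u_n\geq u_1>0$, with the crucial quantitative lower bound supplied by the interior ball condition: testing against the $p$-torsion function (or the solution of $-\Delta_p w = c\chi_{\omega}$ on a ball) yields, for every $\omega\Subset\Omega$, a constant $c_\omega>0$ with $u_n\geq c_\omega$ in $\omega$ uniformly in $n$. This local lower bound is what allows the singular term $f_n h(u_n)$ to be controlled locally in $L^1$.

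Next come the a priori estimates. Testing the equation for $u_n$ with $u_n$ itself, one splits the right-hand side: the $g$-term is handled by $k(u_n+\tfrac1n)\le \overline C(u_n+\tfrac1n)^q$, Hölder with exponents adapted to $g\in L^{(p^*/(1+q))'}(\Omega)$, and the Sobolev embedding, which — since $q<p-1$ — gives a sublinear power of $\|\nabla u_n\|_p$ and so can be absorbed. The delicate term is $\int_\Omega f_n h(u_n)u_n$ in the strongly singular regime $\gamma>1$, where $h(u_n)u_n\lesssim u_n^{1-\gamma}$ may blow up; here I would use the by-now classical device (see \cite{bo}): test instead with $u_n^\theta$ for a suitable $\theta>1$ chosen depending on $m$ and $\gamma$, or equivalently test the equation for the power $u_n^{(\gamma+m')/(\ldots)}$, so that $f\in L^m$ with $1<\gamma<2-\tfrac1m$ is exactly the threshold making $\int f_n u_n^{\theta-\gamma}$ controllable by Hölder and the resulting $\|\nabla u_n^{(\theta+p-1)/p}\|_p^p$ absorbable. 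This yields a uniform bound for $u_n$ in $W^{1,p}_0(\Omega)$ (and in fact a bit more integrability). By monotonicity and these bounds, $u_n\uparrow u$ with $u\in W^{1,p}_0(\Omega)$, $u_n\to u$ strongly in $W^{1,p}_0(\Omega)$ after extracting the almost-everywhere convergence of gradients (via the standard Boccardo–Murat argument, testing with $T_\delta(u_n-u)$), and $u>0$ a.e.\ in $\Omega$ with the local lower bounds above.

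Finally I would pass to the limit in the weak formulation. For $\varphi\in C^1_c(\Omega)$ with support in $\omega\Subset\Omega$, the left-hand side converges by strong $W^{1,p}_0$-convergence; for the right-hand side, on $\omega$ one has $u_n\geq c_\omega>0$, so $\frac{f_n h(u_n)}{1+\frac1n h(u_n)}\le \underline C f\, c_\omega^{-\gamma}\in L^1(\omega)$ with $f_n h(u_n)/(1+\tfrac1n h(u_n))\to f h(u)$ a.e., and dominated convergence applies; similarly $g_n k(u_n+\tfrac1n)\to g\,k(u)$ with the domination $g_n k(u_n+\tfrac1n)\le \overline C g (u_n+1)^q$, whose $L^1(\omega)$-bound follows from the uniform $W^{1,p}_0$-estimate and the assumption on $g$. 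Hence $u$ is a weak solution in the sense of Definition \ref{defweak}; the boundary condition is encoded in $u\in W^{1,p}_0(\Omega)$. I expect the main obstacle to be the sharp choice of test function (the exponent $\theta$) in the strongly singular term so that the algebra of exponents closes exactly under the hypothesis $1<\gamma<2-\tfrac1m$ while simultaneously keeping the $g$-contribution absorbable; the interplay of the two nonlinearities in a single a priori estimate, rather than either one in isolation, is the one genuinely new bookkeeping point compared with the existing literature.
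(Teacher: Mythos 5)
There is a genuine gap at the heart of your a priori estimate, namely the treatment of $\int_\Omega f_n h_n(u_n)u_n\lesssim \int_\Omega f_n u_n^{1-\gamma}$ with $\gamma>1$. Your two proposed tools do not close this term. First, the lower bound you extract from the interior ball condition is only $u_n\ge c_\omega$ on compact subsets $\omega\Subset\Omega$; but the dangerous region for $\int f_n u_n^{1-\gamma}$ is precisely the boundary strip $\Omega\setminus\omega$, where $u_n\to 0$ and the negative power blows up, so a local interior bound gives no control there. (In fact a positive lower bound on compacta follows from the strong maximum principle alone and does not use the interior ball condition; that hypothesis is there for a different reason, see below.) Second, the Boccardo--Orsina device of testing with $u_n^{\theta}$, $\theta>1$ (or $\theta\ge\gamma$), produces a uniform bound on $\|\nabla u_n^{(\theta+p-1)/p}\|_{L^p(\Omega)}$, i.e.\ on a \emph{power} of $u_n$ in $W^{1,p}_0(\Omega)$, not on $u_n$ itself; this is exactly why, for $\gamma>1$ and general data, solutions are only in $W^{1,p}_{\rm loc}$ with $u^{(\gamma+p-1)/p}\in W^{1,p}_0$. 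The hypothesis $1<\gamma<2-\frac1m$ does not emerge from that computation, and you yourself flag that you do not see how the exponent algebra closes --- it does not, along this route.

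The missing idea in the paper's proof is a \emph{global, quantitative} boundary lower bound. One constructs a nonincreasing continuous $\underline{h}\le h_n$ and compares $u_n$ with the solutions $v_n$ of the purely singular problem $-\Delta_p v_n=\underline{h}(v_n)f_n$, obtaining $u_n\ge v_n\ge v_1$; the Hopf lemma (this is where the interior ball condition is actually used) then gives $v_1(x)\ge C\delta(x)$, with $\delta$ the distance to $\partial\Omega$. Plugging $u_n\ge C\delta$ into the singular term and applying H\"older with $f\in L^m$ yields
\begin{equation*}
\int_\Omega f_n u_n^{1-\gamma}\le C^{1-\gamma}\|f\|_{L^m(\Omega)}\left(\int_\Omega \delta^{-(\gamma-1)m'}\right)^{\frac{1}{m'}},
\end{equation*}
and the last integral is finite precisely when $(\gamma-1)m'<1$, i.e.\ $\gamma<2-\frac1m$. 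This is where the stated threshold comes from, and it is what makes the plain test function $\varphi=u_n$ admissible and gives the uniform $W^{1,p}_0$ bound on $u_n$ itself. The remainder of your outline (absorption of the $g$-term using $q<p-1$, a.e.\ convergence of gradients \`a la Boccardo--Murat, passage to the limit in the singular right-hand side by splitting on $\{u_n\le\delta\}$ and $\{u_n>\delta\}$) is consistent with the paper, but without the $\delta(x)$ lower bound the proof does not go through.
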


Finally we also dealt with $q=p-1$. In the next result we denote by $C_p$ the best constant for the Poincar\'e inequality in $\Omega$; we also recall that $\overline{C}$ is the one defined by \eqref{pk}.

\begin{theorem}
	\label{ex2}
	Let $f\in\displaystyle L^{\left(\frac{p^*}{1-\gamma}\right)'}(\Omega)$ be a nonnegative function not identically zero and let $g$ such that $||g||_{L^\infty(\Omega)}< (\overline{C}C^p_p)^{-1}$. Let $h$ and $k$ be nonnegative continuous functions satisfying \eqref{ph} with $\gamma \le 1$ and \eqref{pk} with $q=p-1$ respectively. Then there exists at least one weak solution to problem \eqref{pb1}.
\end{theorem}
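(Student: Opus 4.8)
The plan is to adapt the approximation scheme that underlies Theorems \ref{ex1} and \ref{ex_gamma>1} to the borderline exponent $q=p-1$, the new ingredient being that the smallness condition $\|g\|_{L^\infty(\Omega)}<(\overline{C}C_p^p)^{-1}$ must be exploited to absorb the term $g k(u)$ into the left-hand side and thereby recover a uniform $W^{1,p}_0$-estimate. First I would introduce the usual truncated approximating problems
\begin{equation*}
\begin{cases}
-\Delta_p u_n = f_n h_n(u_n) + g\, k(u_n) & \text{in }\Omega,\\
u_n = 0 & \text{on }\partial\Omega,
\end{cases}
\end{equation*}
where $f_n=T_n(f)$ and $h_n(s):=T_n(h(s))$ (or the analogous regularization already used in Section \ref{S3}), so that the right-hand side is bounded and non-singular; existence of a positive $u_n\in W^{1,p}_0(\Omega)\cap L^\infty(\Omega)$ follows by Schauder's fixed point theorem together with the strong maximum principle, exactly as in the proof of Theorem \ref{ex1}. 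A comparison argument gives a lower bound $u_n\ge c_\omega>0$ on every $\omega\subset\subset\Omega$, uniform in $n$, which controls the singular term $h_n(u_n)$ locally.

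The core estimate: testing with $u_n$ gives
\begin{equation*}
\int_\Omega |\nabla u_n|^p = \int_\Omega f_n h_n(u_n) u_n + \int_\Omega g\, k(u_n) u_n
\le \int_\Omega f\, h(u_n) u_n + \overline{C}\,\|g\|_{L^\infty(\Omega)}\int_\Omega u_n^{p}.
\end{equation*}
By Poincar\'e, $\int_\Omega u_n^p\le C_p^p\int_\Omega|\nabla u_n|^p$, so the last term is bounded by $\overline{C}\,\|g\|_{L^\infty}C_p^p\int_\Omega|\nabla u_n|^p$ with coefficient strictly less than $1$; it can therefore be moved to the left. For the first term one uses \eqref{ph}: when $\gamma<1$ one has $f h(u_n)u_n\le \underline{C} f\, u_n^{1-\gamma}$ and H\"older plus Sobolev (with $f\in L^{(p^*/(1-\gamma))'}$) closes the estimate after a Young inequality; when $\gamma=1$ one simply gets $\int_\Omega f h(u_n)u_n\le \underline{C}\|f\|_{L^1(\Omega)}$. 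In either case we obtain $\|u_n\|_{W^{1,p}_0(\Omega)}\le C$ uniformly in $n$. This is the step I expect to be the main obstacle, since without the smallness of $g$ the a priori bound genuinely fails (this is the nonexistence phenomenon recalled in the introduction for the linear case), so the argument has to keep the constant $\overline{C}C_p^p\|g\|_{L^\infty}$ explicitly and cannot be sloppy about which Poincar\'e/Sobolev constant is used.

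From the uniform bound I would extract $u_n\rightharpoonup u$ weakly in $W^{1,p}_0(\Omega)$, a.e. and in $L^r$ for $r<p^*$, with $u\ge c_\omega>0$ on compact subsets, hence $u>0$ in $\Omega$. The passage to the limit in the equation is then routine and parallels Section \ref{S3}: one proves almost-everywhere convergence of the gradients (via the standard Boccardo--Murat / Leray--Lions argument using $T_\delta(u_n-u)$ as test function, together with the already available local lower bound to handle the singular term), which upgrades weak to strong convergence of $\nabla u_n$ in $(L^p_{\mathrm{loc}}(\Omega))^N$; then for fixed $\varphi\in C^1_c(\Omega)$ one passes to the limit in $\int_\Omega |\nabla u_n|^{p-2}\nabla u_n\cdot\nabla\varphi=\int_\Omega (f_n h_n(u_n)+g k(u_n))\varphi$, using dominated convergence on $\mathrm{supp}\,\varphi$ for the right-hand side ($h(u_n)\le \underline{C}c_\omega^{-\gamma}$ there, and $k(u_n)\le \overline{C}u_n^{p-1}$ with $u_n^{p-1}$ equiintegrable by the $W^{1,p}_0$ bound). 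Finally, that the limit actually lies in $W^{1,p}_0(\Omega)$ is already guaranteed by the uniform energy bound and weak lower semicontinuity, and that $F(x,u)\in L^1_{\mathrm{loc}}(\Omega)$ follows from the local lower bound on $u$; hence $u$ is a weak solution to \eqref{pb1} in the sense of Definition \ref{defweak}, completing the proof.
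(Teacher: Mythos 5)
Your core a priori estimate is exactly the one the paper uses: test with $u_n$, bound $g k_n(u_n)u_n\le \overline{C}\|g\|_{L^\infty(\Omega)}u_n^p$, apply Poincar\'e and absorb the resulting term into the left-hand side thanks to $1-\overline{C}\|g\|_{L^\infty(\Omega)}C_p^p>0$, and close with H\"older/Sobolev on the $f$-term (including the $\gamma=1$ convention). That part is correct and is indeed where the smallness hypothesis enters.

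The gap is in your passage to the limit. You assert that ``a comparison argument gives a lower bound $u_n\ge c_\omega>0$ on every $\omega\subset\subset\Omega$, uniform in $n$'', and the rest of your argument (dominated convergence of $h_n(u_n)$ on $\mathrm{supp}\,\varphi$, positivity of $u$, $F(x,u)\in L^1_{\rm loc}(\Omega)$) hinges on it. Under the hypotheses of Theorem \ref{ex2} this bound is not available: $h$ is \emph{not} assumed monotone, so the approximating solutions need not be ordered in $n$ and one cannot compare $u_n$ with $u_1$; the device used in Theorem \ref{ex_gamma>1} (comparison with the solution for a nonincreasing minorant $\underline{h}\le h_n$, then the Hopf lemma) requires the interior ball condition on $\Omega$, which is assumed there but not in Theorem \ref{ex2}, and in any case $\underline{h}$ may vanish identically if $h$ has zeros accumulating at the origin. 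The paper avoids any lower bound on $u_n$: it first shows via Fatou that $(fh(u)+gk(u))\varphi\in L^1(\Omega)$, hence $\{u=0\}\subset\{f=0\}$ up to a null set (see \eqref{app7}), and then splits $\int_\Omega(f_nh_n(u_n)+g_nk_n(u_n))\varphi$ over $\{u_n\le\delta\}$ and $\{u_n>\delta\}$, killing the first piece by testing with $V_\delta(u_n)\varphi$ (with $V_\delta$ as in \eqref{Vdelta}) and letting $\delta\to0$, while on $\{u_n>\delta\}$ the bound $h(s)\le\sup_{s>\delta}h(s)$ suffices. Your proof becomes complete if you replace the local-lower-bound step with this $V_\delta$/Fatou argument (Step 2 of the proof of Theorem \ref{ex1}); positivity of $u$ then follows from the strong maximum principle applied to the limit equation rather than from a uniform bound on the approximations.
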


\begin{remark}\label{rem_exun}
	Collecting the existence results contained in Theorems \ref{ex1}, \ref{ex_gamma>1} and \ref{ex2} with the uniqueness result contained in Corollary \ref{uniq} we obtain that there exists a unique solution $u \in W^{1,p}_0(\Omega)$ to
	$$-\Delta_p u = F(x,u),$$ 
	under the assumptions of Theorems \ref{ex1} and \ref{ex_gamma>1} in case $(h(s)+k(s)){s^{1-p}}$  is decreasing with respect to $s$ and requiring that $f+g$ is almost everywhere positive in $\Omega$.
	\\ Moreover under the assumptions of Theorem \ref{ex2} one has a unique solution if  $h(s){s^{1-p}}$ is decreasing with respect to $s$ and $f$ is almost everywhere positive in $\Omega$.
\end{remark}

\subsection{Proof of the existence results}

 Let us introduce the following scheme of approximation
\begin{equation}
\label{pb1n}
\begin{cases}
\displaystyle -\Delta_p u_n = f_n h_n(u_n) + g_n k_n(u_n) & \text{ in }\Omega,\\
u_n=0 & \text{ on }\partial\Omega,
\end{cases}
\end{equation}
where $f_n=T_n(f)$ and $g_n=T_n(g)$. Moreover, defining $h(0):= \lim_{s\to 0}h(s)$, we set
$$
h_n(s)=\begin{cases}
T_n(h(s)) & \text{ for }s>0, \\
\min(n,h(0)) & \text{ otherwise},
\end{cases}\quad
\text{and}\quad
k_n(s)=\begin{cases}
T_n(k(s)) & \text{ for }s>0, \\
0 & \text{ otherwise}.
\end{cases}
$$
The existence of a weak solution $u_{n} \in \sob$ is guaranteed by \cite{ll}. Moreover, by Theorem 4.2 of \cite{sta}, we get that $u_n$ is bounded and, since the right hand side of \eqref{pb1n} is nonnegative, that $u_{n}$ is nonnegative.

\begin{remark}
	\label{increas}
	Under the assumptions of Remark \ref{rem_exun} one has that the approximating sequence $\{u_n\}$ is increasing w.r.t. $n$. Indeed defining $F_n(x,s)=f_n(x)h_n(s)+g_n(x)k_n(s)$ one deduces that for every $n$ in $\N$
	$$
	F_n(x,s)\leq F_{n+1}(x,s)\quad \forall s\in (0,\infty) \text{ and for a.e. }x\in\Omega.
	$$
	This allows to apply Theorem \ref{comparison}, yielding to 
	$$
	u_n\leq u_{n+1} \qquad \forall n\in\N.
	$$
\end{remark}

\medskip

\begin{proof}[Proof of Theorem \ref{ex1}]
We divide the proof in two steps. In the first one, we show a priori estimates on $u_n$, solutions to \eqref{pb1n}. In the second one we pass to the limit our approximation in order to deduce the existence of a weak solution to \eqref{pb1}. \\
\textbf{Step 1.}
Let us choose $u_n$ as a test function in the weak formulation of \eqref{pb1n} and from the H\"older inequality and from \eqref{ph},\eqref{pk}, one gets
\begin{eqnarray}
\label{app1}
\int_{\Omega} |\nabla u_n|^p &=& \int_{\Omega} \left(f_n h_n(u_n)u_n + g_n k_n(u_n)u_n\right) \nonumber \\
&\leq & \underline{C}\int_{\Omega} f_n u_n^{1-\gamma} + \overline{C}\int_{\Omega} g_n u_n^{1+q} \nonumber \\
&\leq & \underline{C}||f||_{L^{\left(\frac{p^*}{1-\gamma}\right)'}(\Omega)} ||u_n||^{1-\gamma}_{L^{p^*}(\Omega)}+ \overline{C}||g||_{L^{\left(\frac{p^*}{q+1}\right)'}(\Omega)} ||u_n||^{q+1}_{L^{p^*}(\Omega)}. 
\end{eqnarray} 
If $\displaystyle ||u_n||_{L^{p^*}(\Omega)}\leq 1$, we deduce that $\{u_n\}$ is bounded in $\sob$ uniformly in $n$. Otherwise, recalling that $0\leq 1-\gamma<q+1<p$, we obtain, applying the Sobolev embedding Theorem on the left-hand side of \eqref{app1}, that
\begin{equation}
\label{app2}
\displaystyle ||u_n||_{L^{p^*}(\Omega)}^p\leq C\left(||f||_{L^{\left(\frac{p^*}{1-\gamma}\right)'}(\Omega)}+||g||_{L^{\left(\frac{p^*}{q+1}\right)'}(\Omega)}\right)||u_n||^{q+1}_{L^{p^*}(\Omega)}.
\end{equation}
This implies, dividing by $\displaystyle ||u_n||^{q+1}_{L^{p^*}(\Omega)}$ both members of \eqref{app2}, that $\{u_n\}$ is bounded in $\displaystyle L^{p^*}(\Omega)$ uniformly in $n$. It follows from \eqref{app1} that $\{u_n\}$ is bounded in $\sob$ with respect to $n$. This implies that there exists a nonnegative function $u$ in $\sob$ such that $u_n \to u$  weakly in $\sob$ and almost everywhere in $\Omega$.
Let us take $0\le \varphi\in \sob$ as test function in the weak formulation of \eqref{pb1n}; one obtains, using the Young inequality, that 
\begin{equation}\label{stimaL1}
	\int_\Omega \left(f_nh_n(u_n)+g_nk_n(u_n)\right)\varphi =\int_\Omega |\nabla u_n|^{p-2}\nabla u_n\cdot\nabla\varphi\leq \frac{1}{p'}\int_{\Omega} |\nabla u_n|^p+\frac{1}{p}\int_\Omega |\nabla\varphi|^p \leq C.
\end{equation}
Hence $\{f_nh_n(u_n)+g_nk_n(u_n)\}$ is bounded in $L^1_{\rm loc}(\Omega)$ and, applying Theorem 2.1 of \cite{bm}, that $\nabla u_n$ converges almost everywhere in $\Omega$ to $\nabla u$. 

\medskip

\textbf{Step 2.} In this second step we prove that $u$ obtained in the first step is a weak solution to \eqref{pb1}. \\

First of all we apply the Fatou Lemma in \eqref{stimaL1} in order to deduce that
\begin{equation*}
\label{app6}
\int_{\Omega} (fh(u)+gk(u))\varphi \leq \liminf_{n\to\infty}\int_{\Omega}\left(f_nh_n(u_n)+g_nk_n(u_n)\right)\varphi \le C,
\end{equation*}
hence $(fh(u)+gk(u))\varphi\in L^1(\Omega)$ for any nonnegative $\varphi \in \sob$. As a consequence, if $h(s)$ is unbounded as $s$ tends to $0$, we deduce that 
\begin{equation}
\label{app7}
\{u=0\}\subset\{f=0\},
\end{equation}
up to a set of zero Lebesgue measure. \\
From now on, we assume that $h(s)$ is unbounded as $s$ tends to $0$. Let $\varphi$ be a nonnegative function in $\sob\cap\linf$. Choosing it as test function in the weak formulation of \eqref{pb1n} we have
\begin{equation}
\label{app4}
\int_\Omega |\nabla u_n|^{p-2}\nabla u_n\cdot\nabla\varphi=\int_\Omega (f_nh_n(u_n)+g_nk_n(u_n))\varphi.
\end{equation}
We want to pass to the limit in \eqref{app4} as $n$ tends to infinity. We fix $\delta>0$ and we decompose the right hand side in the following way:
\begin{eqnarray}
\label{app11}
\int_\Omega (f_nh_n(u_n)+g_nk_n(u_n))\varphi &=&\int_{\{u_n\leq \delta\}}(f_nh_n(u_n)+g_nk_n(u_n))\varphi \nonumber \\
&&+\int_{\{u_n>\delta\}}(f_nh_n(u_n)+g_nk_n(u_n))\varphi.
\end{eqnarray}
Therefore we have, thanks to Lemma $1.1$ contained in \cite{sta}, that $V_\delta(u_n)\varphi$ belongs to $\sob$, where $V_\delta(s)$ is defined by \eqref{Vdelta}. So we take it as test function in the weak formulation of \eqref{pb1n} and we obtain
\begin{equation}
\begin{aligned}
\label{app8}
\int_{\{u_n\leq\delta\}}(f_nh_n(u_n)+g_nk_n(u_n))\varphi &\stackrel{\eqref{Vdelta}}\leq \int_\Omega(f_nh_n(u_n)+g_nk_n(u_n))V_\delta(u_n)\varphi \nonumber \\
&=\int_{\Omega} |\nabla u_n|^{p-2}\nabla u_n\cdot\nabla\varphi V_\delta(u_n)-\frac{1}{\delta}\int_{\{\delta<u_n<2\delta\}} |\nabla u_n|^p\varphi \nonumber \\
&\leq\int_{\Omega} |\nabla u_n|^{p-2}\nabla u_n\cdot\nabla\varphi V_\delta(u_n)
\end{aligned}
\end{equation}
Using that $V_\delta$ is bounded we deduce that $|\nabla u_n|^{p-2}\nabla u_n V_\delta(u_n)$ converges to $|\nabla u|^{p-2}\nabla u V_\delta(u)$ weakly in $L^{p'}(\Omega)^N$ as $n$ tends to infinity. This implies that
\begin{equation}
\label{app9}
\lim_{n\to\infty}\int_{\{u_n\leq\delta\}}(f_nh_n(u_n)+g_nk_n(u_n))\varphi\leq \int_{\Omega}|\nabla u|^{p-2}\nabla u\cdot\nabla\varphi V_\delta(u).
\end{equation}
Since $V_\delta(u)$ converges to $\chi_{\{u=0\}}$ a.e. in $\Omega$ as $\delta$ tends to $0$ and since $u\in\sob$, then $|\nabla u|^{p-2}\nabla u\cdot\nabla\varphi V_\delta(u)$ converges to $0$ a.e. in $\Omega$ as $\delta$ tends to $0$. Applying the Lebesgue Theorem on the right hand side of \eqref{app9} we obtain that
\begin{equation}
\label{app10}
\lim_{\delta\to 0^+}\lim_{n\to\infty}\int_{\{u_n\leq\delta\}}(f_nh_n(u_n)+g_nk_n(u_n))\varphi=0.
\end{equation}
As regards the second term in the right hand side of \eqref{app11} we have 
\begin{equation}
\label{app12}
0\leq (f_nh_n(u_n)+g_nk_n(u_n))\chi_{\{u_n>\delta\}}\varphi \stackrel{\eqref{ph},\eqref{pk}}\leq \left(f\sup_{\{s>\delta\}}h(s)+\overline{C}gu_n^q\right)\varphi.
\end{equation}
Thanks to the a priori estimates on $u_n$ and using the Rellich-Kondrakov Theorem, we deduce, up to subsequence, that $u_n^q$ converges to $u^q$ strongly in $L^{\left(\frac{p^*}{1+q}\right)}(\Omega)$. Since $g$ belongs to $\displaystyle L^{\left(\frac{p^*}{1+q}\right)'}(\Omega)$ this implies that the right hand side of \eqref{app12} converges strongly in $L^1(\Omega)$. Moreover we can always assume that $\delta\not\in\{\alpha: |\{u=\alpha\}|>0\}$ which is at most a countable set. As a consequence $\chi_{\{u_n>\delta\}}$ converges to $\chi_{\{u>\delta\}}$ a.e. in $\Omega$. Hence, using once again the Lebesgue Theorem in \eqref{app12}, we deduce first that $(f_nh_n(u_n)+g_nk_n(u_n))\chi_{\{u_n>\delta\}}\varphi$ converges to $(fh(u)+gk(u))\chi_{\{u>\delta\}}\varphi$ strongly in $L^1(\Omega)$ as $n$ tends to infinity, then, since $(fh(u)+gk(u))\varphi$ belongs to $L^1(\Omega)$, that $(fh(u)+gk(u))\chi_{\{u>\delta\}}\varphi$ converges to $(fh(u)+gk(u))\chi_{\{u>0\}}\varphi$ strongly in $L^1(\Omega)$ as $\delta$ tends to $0$. Recalling \eqref{app7} and Remark \ref{kcont}, we conclude that
\begin{eqnarray}
\label{app13}
\lim_{\delta\to 0^+}\lim_{n\to\infty}\int_{\{u_n>\delta\}}(f_nh_n(u_n)+g_nk_n(u_n))\varphi&=&\int_{\{u>0\}}(fh(u)+gk(u))\varphi \nonumber \\
&\stackrel{\eqref{app7}}=&\int_{\Omega}(fh(u)+gk(u))\varphi.
\end{eqnarray}
Finally, using the weak convergence of $u_n$ in $W^{1,p}_0(\Omega)$ and the almost everywhere convergence of the gradients one can pass to the limit as $n\to \infty$ in the left hand side of \eqref{app4}. Moreover, by \eqref{app10} and by \eqref{app13}, we can also take to the limit the right hand side of \eqref{app4} in order to deduce that
\begin{equation}
\label{app14}
\int_{\Omega}|\nabla u|^{p-2}\nabla u\cdot\nabla\varphi=\int_{\Omega}(fh(u)+gk(u))\varphi \qquad \forall\,\, 0\leq\varphi\in\sob\cap\linf.
\end{equation}
Moreover, decomposing any $\varphi=\varphi^+-\varphi^-$, and using that \eqref{app14} is linear in $\varphi$, we deduce that \eqref{app14} holds for every $\varphi\in \sob \cap L^\infty(\Omega)$. \\
We treated $h(s)$ unbounded as $s$ tends to $0$, as regards bounded function $h$ the proof is easier and the only difference deals with the passage to the limit in the right hand side of \eqref{app4}. We can avoid introducing $\delta$ and we can substitute \eqref{app12} with 
\begin{equation*}
\label{app16}
0\leq (f_nh_n(u_n)+g_nk_n(u_n))\varphi\leq \left(f||h||_{\linf}+\overline{C}gu_n^q\right)\varphi.
\end{equation*}
Using the same argument above we have that $(f_nh_n(u_n)+g_nk_n(u_n))\varphi$ converges to $(fh(u)+gk(u))\varphi$ strongly in $L^1(\Omega)$ as $n$ tends to infinity. Then we can conclude as in case of an unbounded $h$.  \\
Finally, it follows from \eqref{app14} and using the strong maximum principle that $u>0$ almost everywhere in $\Omega$. This implies that $u$ is a weak solution to \eqref{pb1n}.
\end{proof}

Now we prove Theorem \ref{ex_gamma>1}, namely the case where $\gamma>1$; here we need a more refined argument in order to control the possibly singular term.

\begin{proof}[Proof of Theorem \ref{ex_gamma>1}]
	We take $u_n$ as a test function in \eqref{pb1n} yielding to 
	\begin{equation}\label{313}
	\begin{aligned}
	\displaystyle \int_{\Omega}|\nabla u_n|^p &\le \int_{\Omega} \left(f_n h_n(u_n)u_n + g_n k_n(u_n)u_n\right) \le \underline{C}\int_{\Omega} f_n u_n^{1-\gamma} + \overline{C}\int_{\Omega} g_n u_n^{1+q}
	\\
	&\le \underline{C}\int_{\Omega} f_n u_n^{1-\gamma} +  \overline{C}||g||_{L^{\left(\frac{p^*}{q+1}\right)'}(\Omega)} ||u_n||^{q+1}_{L^{p^*}(\Omega)}.
	\end{aligned}
	\end{equation}
	Hence, we just need an estimate on the first term of the right hand side of \eqref{313}. First of all let us observe that there exists a nonincreasing and continuous function $\underline{h}:[0,\infty)\rightarrow [0,\infty)$ such that
	\begin{equation*}
	\underline{h}(s)\le h_n(s)\,,\ \ \forall \ s>0, \ \ n\in\mathbb{N}\,.
	\label{funzionesotto}
	\end{equation*}
	For the construction of such $\underline{h}$ we refer to \cite{do}. Hence let us consider  $v_n\in W^{1,p}_0(\Omega)\cap L^\infty(\Omega)$ solution to  
	$$
	\begin{cases}
	\displaystyle -\Delta_p v_n = \underline{h}(v_n)f_n &  \text{in}\, \Omega, \\
	v_{n}=0 & \text{on}\ \partial \Omega.
	\label{pbv}
	\end{cases}
	$$
 	Once again, reasoning as in \cite{do,ddo}, one has that $v_{n}$ is nondecreasing  with respect to $n$ and also that $u_n\ge v_n\ge v_{1}$. Moreover, it follows from the Hopf Lemma (see Lemma A$.3$ of \cite{sak}) that
	\begin{equation*}
	v_{1}(x)\ge C\delta(x), \text{  for  } x \in \Omega,
	\label{stimau1}
	\end{equation*}
	where $\delta(x)$ is the distance function from the boundary $\partial \Omega$.\\
	Thanks to the previous we can finally estimate the term on the right hand side of \eqref{313} as follows:
	$$\int_{\Omega}f_n u_n^{1-\gamma} \le C^{1-\gamma}||f||_{L^m(\Omega)}\left( \int_{\Omega} \frac{1}{\delta^{(\gamma-1)m'}}\right)^{\frac{1}{m'}},$$
	which is finite since $\displaystyle \gamma<2 - \frac{1}{m}.$   
	This allows to have an estimate on $u_n$ in $W^{1,p}_0(\Omega)$ which is independent on $n$. Hence one can reason as in Step $2$ of Theorem \ref{ex1} in order to deduce the existence of a weak solution.
\end{proof}

Finally we prove Theorem \ref{ex2}.
\begin{proof}[Proof of Theorem \ref{ex2}]
We choose $u_n$ itself as a test function in the weak formulation of \eqref{pb1n} and applying the H\"older inequality and the Poincar\'e inequality, we get 
\begin{eqnarray*}
\label{app20}
\int_{\Omega}|\nabla u_n|^p &\stackrel{\eqref{ph}}\leq& \underline{C}\int_{\Omega}fu_n^{1-\gamma}+ \overline{C}||g||_{L^\infty(\Omega)} \int_{\Omega}u_n^p \nonumber \\
&\leq& \underline{C}||f||_{L^{\left(\frac{p^*}{1-\gamma}\right)'}(\Omega)} ||u_n||^{1-\gamma}_{L^{p^*}(\Omega)}+\overline{C} ||g||_{L^\infty(\Omega)}C^p_p\int_\Omega |\nabla u_n|^p,
\end{eqnarray*}
which, recalling $1-\overline{C} ||g||_{L^\infty(\Omega)}C^p_p>0$, implies that 
\begin{equation}
\label{app21}
\int_{\Omega}|\nabla u_n|^p\leq \frac{\underline{C}}{1-\overline{C} ||g||_{L^\infty(\Omega)}C^p_p}
	||f||_{L^{\left(\frac{p^*}{1-\gamma}\right)'}(\Omega)} ||u_n||^{1-\gamma}_{L^{p^*}(\Omega)}.
\end{equation}
Applying the Sobolev embedding Theorem in the right hand side of \eqref{app21}, we have
\begin{equation*}
\label{app22}
||u_n||_{\sob}^p\leq  \frac{\underline{C}\mathcal{S}^{1-\gamma}}{1-\overline{C} ||g||_{L^\infty(\Omega)}C^p_p} ||f||_{L^{\left(\frac{p^*}{1-\gamma}\right)'}(\Omega)} ||u_n||^{1-\gamma}_{\sob},
\end{equation*}
where $\mathcal{S}$ is the constant of the embedding. Since $p>1-\gamma$ it follows that $\{u_n\}$ is bounded in $\sob$. So, up to subsequence, we have
$u_n \to u$  weakly in $\sob$ and almost everywhere in $\Omega$.
Finally we can repeat the argument of Step $2$ of Theorem \ref{ex_gamma>1} in order to conclude that $u$ is a solution to \eqref{pb1}.
\end{proof}

\subsection{A concluding remark}
\label{S4}

Here we underline that the result in Theorem \ref{ex_gamma>1} is not sharp, at least in the model case. 
Let $\Omega\subset \mathbb{R}^N$ be open and bounded with smooth boundary and let us consider the following problem
\begin{equation}
\label{pbfinal}
\begin{cases}
\dis -\Delta u= \frac{f}{u^\gamma} + g u^q & \mbox{in $\Omega$,} \\
u = 0  & \mbox{on $\partial\Omega$,}
\end{cases}
\end{equation}
where $\gamma>1$, $q<1$, $0<f\in L^1(\Omega)$ and $g\in L^\infty(\Omega)$ nonnegative.
We recall the following result proven in \cite{s}.
\begin{theorem}\label{teocin}
	Let $\gamma >1$, $q<1$ and let us suppose that there exists a function $u_0\in H^1_0(\Omega)$ such that 
	\begin{equation}\label{cin}
	\int_{\Omega} f u_0^{1-\gamma}<\infty.
	\end{equation}
	Then there exists a solution $u\in H^1_0(\Omega)$ to \eqref{pbfinal}.
\end{theorem}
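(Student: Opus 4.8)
The plan is to obtain the solution by approximation, with the entire difficulty concentrated in a \emph{uniform} $H^1_0(\Omega)$--bound on the approximants, and to extract that bound from a variational principle engineered so that the compatibility condition \eqref{cin} is exactly what makes it work. Since $f>0$ a.e., finiteness of the integral in \eqref{cin} forces $u_0>0$ a.e., and, replacing $u_0$ by $|u_0|$, I may assume $u_0\ge 0$. For $n\in\N$ I set $f_n=T_n(f)$ and $g_n=g$ (recall $g\in L^\infty(\Omega)$) and consider the regularized problems
\begin{equation*}
-\Delta u_n=\frac{f_n}{\left(u_n+\tfrac1n\right)^{\gamma}}+g_n\,(u_n^+)^{q}\quad\text{in }\Omega,\qquad u_n=0\quad\text{on }\partial\Omega,
\end{equation*}
and I would obtain $u_n$ as the minimiser over $H^1_0(\Omega)$ of
\begin{equation*}
J_n(v):=\frac12\io|\G v|^{2}-\io H_n(x,v^+)-\frac1{q+1}\io g_n\,(v^+)^{q+1},\qquad
H_n(x,s):=\int_{u_0(x)}^{s}\frac{f_n(x)}{\left(t+\tfrac1n\right)^{\gamma}}\,dt ,
\end{equation*}
the decisive feature being that the primitive $H_n$ of the singular term is anchored at $u_0$, not at $0$.

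The role of \eqref{cin} is the following: since $\gamma>1$, for every $s\ge0$ one has $H_n(x,s)\le\int_{u_0(x)}^{+\infty}f_n(x)\,t^{-\gamma}\,dt=\tfrac{f_n(x)}{\gamma-1}\,u_0(x)^{1-\gamma}$, while $H_n(x,s)\le0$ for $0\le s\le u_0(x)$; hence, by \eqref{cin},
\begin{equation*}
\io H_n(x,v^+)\le\frac1{\gamma-1}\io f\,u_0^{1-\gamma}=:C_1<\infty\qquad\text{for every }v\in H^1_0(\Omega).
\end{equation*}
Combined with Poincar\'e--Sobolev and $q+1<2$ this gives $J_n(v)\ge\tfrac12\|\G v\|_{L^2}^{2}-C_1-C\|\G v\|_{L^2}^{q+1}$, so $J_n$ is coercive and weakly lower semicontinuous on $H^1_0(\Omega)$ and attains its minimum; replacing a minimiser by its positive part does not increase $J_n$, and a standard argument (strong maximum principle, plus \eqref{cin} to exclude $u_n\equiv0$) shows that the minimiser $u_n$ is positive in $\Omega$, hence solves the regularized problem in the weak sense and is, by Corollary \ref{uniq}, its unique weak solution. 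The uniform estimate then drops out: since $H_n(x,u_0)=0$ one has $J_n(u_n)\le J_n(u_0)\le\tfrac12\|\G u_0\|_{L^2}^{2}$, so
\begin{equation*}
\tfrac12\|\G u_n\|_{L^2}^{2}\le J_n(u_n)+C_1+C\|\G u_n\|_{L^2}^{q+1}\le\tfrac12\|\G u_0\|_{L^2}^{2}+C_1+C\|\G u_n\|_{L^2}^{q+1},
\end{equation*}
and $q+1<2$ forces $\|u_n\|_{H^1_0(\Omega)}\le C$, uniformly in $n$.

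The remaining steps are routine. The sequence $\{u_n\}$ is nondecreasing: its right--hand side is nondecreasing in $n$ and has the monotonicity required by Theorem \ref{comparison} (both $\gamma>1$ and $q<1$ are fine), whence $u_n\le u_{n+1}$ and $u_n\uparrow u$ for some $u\in H^1_0(\Omega)$. Comparing (again via Theorem \ref{comparison}) with the bounded positive solution $\underline v$ of $-\Delta\underline v=f_1(\underline v+1)^{-\gamma}$ in $\Omega$ with zero datum gives $u_n\ge u_1\ge\underline v$, and, $\partial\Omega$ being smooth, the Hopf lemma (cf.\ \cite{sak}) gives $\underline v(x)\ge c\,\delta(x)$ with $\delta$ the distance to $\partial\Omega$; hence $u\ge c\,\delta>0$ in $\Omega$ and $f_n(u_n+\tfrac1n)^{-\gamma}$ stays bounded on every compact subset. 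Testing the regularized problems with $\varphi\in C^1_c(\Omega)$ and using Young's inequality together with the uniform bound, the right--hand sides are bounded in $L^1_{\rm loc}(\Omega)$, so $\G u_n\to\G u$ a.e.\ by \cite{bm}; one then passes to the limit as in Step~2 of the proof of Theorem \ref{ex1} (here $p=2$), splitting $\Omega$ into $\{u_n\le\delta\}$ and $\{u_n>\delta\}$, using $V_\delta(u_n)\varphi$ as a test function on the former and the lower bound $u_n\ge c\,\delta$ on the latter, to conclude that $u\in H^1_0(\Omega)$, $u>0$, is a weak solution of \eqref{pbfinal}.

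The one genuine obstacle is the uniform $H^1_0(\Omega)$--estimate. Testing the regularized problems directly with $u_n$ does \emph{not} work: for $\gamma>1$ the quantity $f_n\,u_n(u_n+\tfrac1n)^{-\gamma}$ need not be uniformly integrable on the set where $u_n$ is small (which, by the Hopf bound, is a boundary layer of thickness of order $1/n$), and $f\in L^1(\Omega)$ gives no control there. Anchoring the primitive of the singular nonlinearity at $u_0$ is precisely the device that converts \eqref{cin} into the bound one needs, and I expect this to be the crux of the argument.
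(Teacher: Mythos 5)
The paper does not actually prove Theorem \ref{teocin}: it is quoted verbatim from \cite{s} ("We recall the following result proven in \cite{s}"), so there is no in-paper argument to measure you against. Judged on its own, your proposal is essentially correct and reproduces the mechanism of the variational proof in \cite{s}: the whole point of the compatibility condition \eqref{cin} is that it makes the energy functional proper (finite at $u_0$) and bounded below on the singular part, and your device of anchoring the primitive $H_n(x,\cdot)$ at $u_0(x)$ rather than at $0$ is exactly the right way to convert \eqref{cin} into the uniform $H^1_0(\Omega)$ bound, via $H_n(x,s)\le \frac{f_n(x)}{\gamma-1}u_0(x)^{1-\gamma}$ and $J_n(u_n)\le J_n(u_0)\le\frac12\|\G u_0\|^2_{L^2(\Omega)}$. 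Your diagnosis of why the naive estimate fails (testing with $u_n$ gives $\int_\Omega f_n u_n^{1-\gamma}$-type terms that \eqref{cin} does not control, since $u_n\le u_0$ makes $u_n^{1-\gamma}\ge u_0^{1-\gamma}$) is also the correct identification of the crux. The limit passage, the monotone comparison via Theorem \ref{comparison}, and the Hopf-type lower bound are all consistent with the machinery already set up in Sections \ref{S2}--\ref{S3} of the paper.

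One step is glossed over and deserves a sentence in a full write-up: the passage from "$u_n$ is a minimiser of $J_n$'' to "$u_n$ is a weak solution of the regularized problem''. The functional $J_n$ as you wrote it is not G\^ateaux differentiable at functions vanishing on a set of positive measure, because $s\mapsto H_n(x,s^+)$ has a kink at $s=0$; invoking the strong maximum principle before having the equation is circular. This is harmless but must be handled: for fixed $n$ the truncated nonlinearity is bounded and Lipschitz, so one can either (i) replace $(t+\tfrac1n)^{-\gamma}$ by $(|t|+\tfrac1n)^{-\gamma}$ in the primitive, making $J_n$ of class $C^1$ on $H^1_0(\Omega)$, obtain the Euler--Lagrange equation, and then get $u_n\ge 0$ (and $u_n>0$ by Hopf) a posteriori from the sign of the right-hand side, or (ii) note that once $u_n>0$ a.e.\ is known, the difference quotients of $H_n(x,(u_n+t\varphi)^+)$ are dominated by $n^{1+\gamma}|\varphi|$ and converge a.e., so dominated convergence yields the equation. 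With that repair the argument is complete; it is, as far as one can tell, the same variational route as \cite{s}, supplemented by the approximation-and-comparison framework of the present paper to handle the extra term $gu^q$.
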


Using the previous result we have the following existence theorem:
\begin{theorem}\label{cinmio}
		Let $f\in L^m(\Omega)$ with $m> 1$ be a nonnegative  function and let $g\in L^{\infty}(\Omega)$ be a nonnegative function.   Let $1<\gamma<3 - \frac{2}{m}$ and  $q<1$ then there exists a solution to problem \eqref{pbfinal}.
\end{theorem}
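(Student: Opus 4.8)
The plan is to deduce everything from Theorem \ref{teocin}: it is enough to exhibit one function $u_0 \in H^1_0(\Omega)$ satisfying the integrability condition \eqref{cin}, namely $\int_\Omega f u_0^{1-\gamma} < \infty$, since Theorem \ref{teocin} already incorporates the lower order term $g u^q$ with $g \in L^\infty(\Omega)$ and $q < 1$. As $1 - \gamma < 0$, the only obstruction in \eqref{cin} is the rate at which $u_0$ vanishes near $\partial\Omega$, so $u_0$ should be chosen to stay ``as flat as possible'' at the boundary while keeping finite energy.

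First I would fix a boundary profile. Let $w \in H^1_0(\Omega) \cap C^1(\overline\Omega)$ be the torsion function, i.e. the solution of $-\Delta w = 1$ in $\Omega$ with $w = 0$ on $\partial\Omega$; since $\partial\Omega$ is smooth, elliptic regularity gives $w \in C^1(\overline\Omega)$, the strong maximum principle gives $w > 0$ in $\Omega$, and the Hopf boundary lemma together with $w \in C^1(\overline\Omega)$ yields constants $0 < c_1 \le c_2$ with $c_1 \delta(x) \le w(x) \le c_2 \delta(x)$ in $\Omega$, where $\delta(x) := \mathrm{dist}(x, \partial\Omega)$; moreover $|\nabla w| \le C$ in $\Omega$. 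I would then set $u_0 := w^\beta$, with $\beta > 0$ to be fixed, so that $\nabla u_0 = \beta w^{\beta-1}\nabla w$ and $|\nabla u_0|^2 \le C w^{2\beta - 2} \le C \delta^{2\beta - 2}$, which is integrable on $\Omega$ as soon as $2\beta - 2 > -1$, i.e. $\beta > \tfrac12$. In that range $u_0 \in H^1(\Omega) \cap C(\overline\Omega)$ with $u_0 = 0$ on $\partial\Omega$, hence $u_0 \in H^1_0(\Omega)$ by the usual characterization of $H^1_0$ on smooth domains.

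Next, for the condition \eqref{cin}, I would simply apply the Hölder inequality with exponents $m$ and $m'$ and the two-sided bound $w \simeq \delta$:
\[
\int_\Omega f\, u_0^{1-\gamma} = \int_\Omega f\, w^{\beta(1-\gamma)} \le \|f\|_{L^m(\Omega)} \left( \int_\Omega w^{-\beta(\gamma-1)m'} \right)^{\frac{1}{m'}} \le C\,\|f\|_{L^m(\Omega)} \left( \int_\Omega \delta^{-\beta(\gamma-1)m'} \right)^{\frac{1}{m'}} ,
\]
and the last integral is finite provided $\beta(\gamma-1)m' < 1$, i.e. $\beta < \tfrac{1}{(\gamma-1)m'}$, since $\int_\Omega \delta^{-\sigma} < \infty$ whenever $\sigma < 1$ on a bounded smooth domain.

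Finally, the two constraints $\tfrac12 < \beta$ and $\beta < \tfrac{1}{(\gamma-1)m'}$ admit a common solution if and only if $(\gamma-1)m' < 2$; writing $m' = \tfrac{m}{m-1}$ this is exactly $\gamma < 3 - \tfrac{2}{m}$, which is the standing assumption. Picking any admissible $\beta$ and applying Theorem \ref{teocin} to this $u_0$ finishes the proof. I expect the only genuinely delicate points to be the exponent bookkeeping that produces the threshold $3 - \tfrac2m$ (it is precisely the gain coming from a profile $\delta^\beta$ with $\beta$ close to $\tfrac12$ rather than $\beta = 1$ as in the proof of Theorem \ref{ex_gamma>1}) and the clean justification that $w^\beta$ belongs to $H^1_0(\Omega)$, not merely to $H^1(\Omega)$.
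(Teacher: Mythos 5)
Your proposal is correct and follows essentially the same route as the paper: the paper also applies Theorem \ref{teocin} with $u_0=\delta(x)^{t}$ for some $t>\frac12$, uses H\"older's inequality with exponents $m,m'$, and observes that $\int_\Omega \delta^{-t(\gamma-1)m'}<\infty$ precisely when $t(\gamma-1)m'<1$, which is compatible with $t>\frac12$ exactly under $\gamma<3-\frac2m$. Your substitution of the torsion function $w\simeq\delta$ for $\delta$ itself is only a cosmetic device to make the membership $u_0\in H^1_0(\Omega)$ transparent, not a different argument.
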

\begin{proof}
	In order to show the existence of a solution we employ \eqref{cin} with $u_{0}=\delta(x)^{t}$for some $t>\frac{1}{2}$ and where $\delta(x)$ is the distance function from the boundary $\partial \Omega$. Indeed, one can show 
	that an application of  the H\"older inequality		
	$$
	\int_\Omega f u_0^{1-\gamma}\le  C\io\delta^{t(1-\gamma)m'}
	$$ 
	and the last integral is finite thanks on the assumption $\gamma< 3 - \frac{2}{m}$. 
\end{proof}

We also remark that, in \cite{ch}, Theorem \ref{teocin} is extended for the case of the $p$-Laplacian operator with $p>2$. In this case one can show that a similar result to Theorem \ref{cinmio} with $1<\gamma<1+ \frac{p(m-1)}{(p-1)m}$.


\begin{thebibliography}{99}
\bibitem{aclmop} D. Arcoya, J. Carmona, T. Leonori, P. J. Mart\'inez-Aparicio, L. Orsina and F. Petitta, Existence and nonexistence of solutions for singular quadratic quasilinear equations, J. Differ. Equ. 246 (2009) 4006–4042.
\bibitem{am} D. Arcoya and P. J. Mart\'inez-Aparicio, Quasilinear equations with natural growth, Rev. Mat. Iberoam. 24 (2008)
597–616.
\bibitem{as} D. Arcoya and S. Segura de Le\'on, Uniqueness of solutions for some elliptic equations with a quadratic
gradient term, ESAIM Control Optim. Calc. Var. 16 (2010) 327–336.
\bibitem{bm} L. Boccardo and F. Murat, Almost everywhere convergence of the gradients of solutions to elliptic and parabolic equations, Nonlinear Anal. 19 (1992) 581-597.

\bibitem{boca} L. Boccardo and J. Casado-Diaz, Some properties of solutions of some semilinear elliptic singular problems and applications to the $G$-convergence, Asymptot. Anal. 86 (2014) 1-15.

\bibitem{bo1} L. Boccardo and L. Orsina, Sublinear elliptic equations in $L^s$, Houston Math. J. 20 (1994) 99-114.

\bibitem{bo}L. Boccardo and L. Orsina, Semilinear elliptic equations with singular nonlinearities, Calc. Var. and PDEs  37 (3-4) (2010) 363-380.
	
\bibitem{bct} B. Brandolini, F. Chiacchio and C. Trombetti, Symmetrization for singular semilinear elliptic equations, Ann. Mat. Pura Appl. (4) 193 (2) (2014) 389-404.
	
\bibitem{bros} H. Brezis and L. Oswald, Remarks on sublinear elliptic equations, Nonlinear Anal. 10 (1986) 55-64.
\bibitem{ces} A. Canino, F. Esposito and B. Sciunzi, On the Höpf boundary lemma for singular semilinear elliptic equations, J. Diff. Equ. 266 (9) (2019) 5488-5499. 
\bibitem{cst} A. Canino, B. Sciunzi and  A. Trombetta, Existence and uniqueness for $p$-Laplace equations involving singular nonlinearities, NoDEA Nonlinear Differential Equations Appl. (2016) 23:8.
\bibitem{car} J. Carmona and P.J. Mart\'inez-Aparicio, A singular semilinear elliptic equation with a variable exponent, Advanced Nonlinear Studies 16 (2016) 491-498.
\bibitem{cgr} F. C$\hat{\text{i}}$rstea, M. Ghergu and V. R$\breve{\text{a}}$dulescu, Combined effects of asymptotically linear and singular nonlinearities in bifurcation problems of Lane-Emden-Fowler type, J. Math. Pures Appl. (9) 84 (4) (2005) 493-508.
\bibitem{coco} G. M. Coclite and M. M. Coclite, On the summability of weak solutions for a singular Dirichlet problem in bounded domains, Adv. Differential Equations 19 (5-6) (2014) 585-612.

\bibitem{copal} M. M. Coclite and G. Palmieri, On a singular nonlinear Dirichlet problem, Comm. Partial Differential Equations 14 (10) (1989) 1315-1327. 

\bibitem{ch} S. Cong and Y. Han, Compatibility conditions for the existence of weak solutions to a singular elliptic equation, Bound. Value Probl. 2015, 2015:27, 11 pp.
	\bibitem{crt} M. G. Crandall, P. H. Rabinowitz and L. Tartar, On a dirichlet problem with a singular nonlinearity, Comm.  Part. Diff. Eq. 2 (2) (1977) 193-222.

\bibitem{do} L. M. De Cave, F. Oliva, Elliptic equations with general singular lower order term and measure data, Nonlinear Analysis 128 (2016) 391-411.
	\bibitem{ddo}	L. M. De Cave, R. Durastanti and F. Oliva, Existence and uniqueness results for possibly singular nonlinear elliptic equations with measure data, Nonlinear Differ. Equ. Appl. (2018) 25:18.
	
\bibitem{dur} R. Durastanti, Asymptotic behavior and existence of solutions for singular
elliptic equations, Annali di Matematica Pura ed Applicata (1923 -), in press
, DOI:10.1007/s10231-019-00906-0.
	
	
\bibitem{diazjfa} J.I. Diaz and J.M. Rakotoson, On the differentiability of very weak solutions with right-hand side data integrable with respect to the distance to the boundary, J. Funct. Anal. 257 (2009) 807-831.
\bibitem{fs} F. Faraci and G. Smyrlis, Three solutions for a singular quasilinear elliptic problem, Proc. Edinb. Math. Soc. (2) 62 (1) (2019) 179-196. 
\bibitem{gmm} D. Giachetti, P.J. Mart\'inez-Aparicio and F. Murat, A semilinear elliptic equation with a mild singularity
at u = 0: Existence and homogenization, J. Math. Pures Appl.  107 (2017) 41-77.
\bibitem{GMM} D. Giachetti, P. J. Mart\'inez-Aparicio and F. Murat, Definition, existence, stability and uniqueness of the solution to a semilinear elliptic problem with a strong singularity at $ u = 0 $, Ann. Scuola Normale Pisa (5) 18 (4) (2018) 1395-1442.
\bibitem{gps} D. Giachetti, F. Petitta and S. Segura de Le\'on, A priori estimates for elliptic problems with a strongly singular gradient term and a general datum, Differential  and Integral Equations, 26 (9/10) (2013) 913-948
\bibitem{gg} T. Godoy and A. Guerin, Existence of nonnegative solutions to singular elliptic problems, a variational approach, Discrete Contin. Dyn. Syst. 38 (3) (2018) 1505-1525.


\bibitem{ghl} C. Gui and F. Lin, Regularity of an elliptic problem with a singular nonlinearity, Proc. Roy. Soc. Edinburgh Sect. A 123 (6) (1993)  1021-1029.

\bibitem{ll} J. Leray and J. L. Lions, Quelques r\'esulatats de Vi\text{$\check{s}$}ik sur les probl\'emes elliptiques nonlin\'eaires par les m\'ethodes de Minty-Browder, Bull. Soc. Math. France 93 (1965) 97-107.
	\bibitem{lm} A. C. Lazer and P. J. McKenna,  On a singular nonlinear elliptic boundary-value problem, Proc. Amer. Math. Soc. (1991)  111 (3) 721-730.
\bibitem{locsc}	N. H. Loc and K. Schmitt, Boundary value problems for singular elliptic equations, Rocky Mountain J. Math. 41 (2) (2011).
\bibitem{o}	F. Oliva, Regularizing effect of absorption terms in singular problems, Journal of mathematical analysis and applications, 472 (1) (2019) 1136-1166. 	 
	\bibitem{op} F. Oliva and F. Petitta, On singular elliptic equations with measure sources, ESAIM Control Optim. Calc. Var. 22 (2016) 289-308.
	\bibitem{op2} F. Oliva and F. Petitta, Finite and infinite energy solutions of singular elliptic problems: 	existence and uniqueness, Journal of Differential Equations 264 (1) (2018) 311-340.	
	\bibitem{sak} S. Sakaguchi, Concavity properties of solutions to some degenerate quasilinear elliptic Dirichlet problems, Ann. Scuola Norm. Sup. Pisa Cl. Sci. 14 (1987) 403-421.
	\bibitem{santos} C. A. Santos and L. Santos, How to break the uniqueness of $W^{1,p}_{\rm loc}(\Omega)$-solutions for very singular elliptic problems by non-local terms, Z. Angew. Math. Phys. 69 (6) (2018) Art. 145, 22 pp.
\bibitem{sy} J. Shi and M. Yao, On a singular nonlinear semilinear elliptic problem, Proc. Roy. Soc. Edinburgh Sect. A 128 (6) (1998) 1389-1401.

\bibitem{sta} G. Stampacchia, Le probl\`eme de Dirichlet pour les \'equations elliptiques du seconde ordre \`a coefficients discontinus, Ann. Inst. Fourier (Grenoble) 15 (1965) 189-258.

\bibitem{stuart} C.A. Stuart, Existence and approximation of solutions of nonlinear elliptic problems, Mathematics Report, Battelle Advanced Studies Center, Geneva, Switzerland, (1976) 86.

\bibitem{s} Y. Sun, Compatibility phenomena in singular problems, Proc. Roy. Soc. Edinburgh Sect. A 143 (6) (2013) 1321-1330.

\end{thebibliography}
\end{document}